\def\thm@space@setup{%
	\thm@preskip=\parskip \thm@postskip=0pt
}
\theoremstyle{plain}
\newtheorem{theorem}{Theorem}[section]
\newtheorem{lemma}{Lemma}[section]
\newtheorem{proposition}{Proposition}[section]
\newtheorem*{claim*}{Claim}
\newtheorem*{lemma*}{Lemma}
\newtheorem*{theorem*}{Theorem}
\newtheorem*{questions*}{Questions}
\theoremstyle{plain}
\newtheorem{thm}{Theorem}
\newtheorem{prop}[thm]{Proposition}
\theoremstyle{definition}
\newtheorem{definition}{Definition}[section]
\newtheorem{example}{Example}[section]
\newtheorem*{defn}{Definition}
\theoremstyle{remark}
\newtheorem{remark}{Remark}
\DeclareMathOperator{\Hessian}{Hess}
\DeclareMathOperator{\distance}{dist}
\DeclareMathOperator{\support}{supp}
\renewcommand{\Im}{\operatorname{Im}}
\renewcommand{\Re}{\operatorname{Re}}
\begin{document}
	
	\title{Geometric Analysis on the Diederich--Forn\ae ss Index}
	
	
		\author{Steven G. Krantz\\ Mathematics Department, Washington University, St. Louis\\ sk@math.wustl.edu
			\and
			Bingyuan Liu\\Mathematics Department, University of California, Riverside\\ bingyuan@ucr.edu
			\and
			Marco Peloso\\Mathematics Department, Universit\'{a} degli Studi di Milano, Milano\\ marco.peloso@unimi.it
		}

	\date{\today}
	
	\makeatletter
	\newcommand{\subjclass}[2][1991]{%
		\let\@oldtitle\@title%
		\gdef\@title{\@oldtitle\footnotetext{#1 \emph{Mathematics subject classification.} #2}}%
	}
	\newcommand{\keywords}[1]{%
		\let\@@oldtitle\@title%
		\gdef\@title{\@@oldtitle\footnotetext{\emph{Key words and phrases.} #1.}}%
	}
	\makeatother	
	
	\subjclass[2010]{Primary 32U05; Secondary 53C21}
	
	\maketitle
	
	\begin{abstract}
		Given bounded pseudoconvex domains in 2-dimensional complex Euclidean space. We derive analytical and geometric conditions which guarantee the Diederich-Forn\ae ss index is 1. The analytical condition is independent of strongly pseudoconvex points and extends Forn\ae ss--Herbig's theorem in 2007. The geometric condition reveals the index reflects topological properties of boundary. The proof uses an idea including differential equations and geometric analysis to find the optimal defining function. We also give a precise domain of which the Diederich--Forn\ae ss index is 1. The index of this domain can not be verified by formerly known theorems. 
	\end{abstract}

\section{Introduction}\label{intro}
Let $\Omega$ be a bounded pseudoconvex domain in $\mathbb{C}^n$ with
smooth boundary.  
It is well known that such domain $\Omega$ admits a
plurisubharmonic function $-\log(-\delta(z))$, 
where $\delta$ is the signed distance function, that is, 
\[\delta(z):=\begin{cases}
-\distance(z, \partial\Omega) & z\in\Omega\\
\distance(z, \partial\Omega) & \text{otherwise}.
\end{cases}\]
However, the function $-\log(-\delta(z))$ is unbounded when $z$ approaches the boundary,
which make some analysis on the boundary of the domain intractable. 
In 1977, Diederich and Forn\ae ss showed in \citep{DF77b}, that on  
any bounded pseudoconvex domain with smooth boundary there exists
a bounded, plurisubharmonic 
exhaustion function. 
Their idea was to replace
 $-\log(-\delta(z))$ with
$-(-\rho)^\eta$, where $\rho$ is 
some defining function for $\Omega$ and $0<\eta<1$.  In fact, they proved
that,
on any smoothly bounded pseudoconvex domain $\Omega$, with definining
function $\rho$, there exists $0<\eta\le1$ such that $-(-\rho)^\eta$
is a strictly plurisubharmonic exhaustion function. 
Observe that $-(-\rho)^\eta$ will approach 0 when $z$ goes to boundary, even if it will 
not be smooth at the boundary. 

The existence of bounded plurisubharmonic exhaustion functions 
was later generalized to $C^1$ boundary by Kerzman and Rosay in \citep{KR81} and to 
Lipschitz boundary by Demailly in \citep{De87} (see Harrington
\citep{Ha08} too). 
Recently, Harrington generalized the existence theorem to 
$\mathbb{CP}^n$ in \citep{Ha15}. For discussions in $\mathbb{CP}^n$, the reader is also referred to \citep{OS98} and \citep{OS00} by  Ohsawa and Sibony.



In this paper, we study properties 
of  a given domain $\Omega$, in connection with
the optimization of the exponent in $-(-\rho)^\eta$. 
We now introduce the Diederich-Forn\ae ss index.

\begin{defn}\label{df}
Let $\Omega$ be a bounded, pseudoconvex domain in $\mathbb{C}^n$. The number 
$0 < \tau_\rho < 1$ is called a {\it Diederich-Forn\ae ss exponent} if there exists a defining 
function $\rho$ of $\Omega$ so that $-(-\rho)^{\tau_\rho}$ is plurisubharmonic. 
The index 
$$
\eta:=\sup \tau_\rho \, ,
$$
where the supremum is taken over all defining functions of $\Omega$, is 
called the {\it Diederich-Forn\ae ss index} of the domain $\Omega$.
\end{defn}

As an indication of the importance of the Diederich-Forn\ae ss index
of $\Omega$ we mentioned that 
Berndtsson and Charpentier \cite{BC00}
and Kohn \cite{Ko99}, with two completely different methods,
showed that,
if  $\Omega$ is smooth, bounded and pseudoconvex, then there exists 
$0<s_\Omega\le
+\infty$  such that the Bergman
projetion $P:W^s(\Omega)\to W^s(\Omega)$ is bounded if $0<s<s_\Omega$,
where $W^s(\Omega)$ denotes the classical Sobolev space. 
Berndtsson and Charpentier 
showed that $s_\Omega\ge\eta/2$, where
$\eta$ is the Diederich-Forn\ae ss index of $\Omega$.  
On the other hand, Kohn provided an estimated for $s_\Omega$ again in
terms of the Diederich-Forn\ae ss index of $\Omega$, although in a
less explicit fashion; see also the paper \cite{PZ14}.  

In an earlier paper, Boas and Straube proved that if 
$\Omega$ is a smooth, bounded, pseudoconvex domain in $\mathbb{C}^n$
admitting a defining function that is plurisubharmonic, 
then the Bergman projection $P:C^\infty(\overline{\Omega})\to
C^\infty(\overline{\Omega})$ is bounded, that is, $\Omega$ {\em
  satisfies conditions R}.  Clearly, for such domains, the 
Diederich-Forn\ae ss index is 1.

In \citep{FH07} and \cite{FH08} Forn\ae ss--Herbig addressed the question
whether a a smooth, bounded, pseudoconvex domain in $\mathbb{C}^2$ and 
$\mathbb{C}^n$, respectively, possessing a defining function that is
plurisubharmonic on the boundary has Diederich-Forn\ae ss index equal to 1.
They answered this question in the positive.
 The converse does not hold in general. That is, if a domain has
 Diederich-Forn\ae ss index 1, it does not necessarily admit a
 defining function which is plurisubharmonic on the boundary. The
 latter statement was proved by Behrens in \citep{Be85} where she gave an
 example of a bounded domain
with real analytic boundary and 
not having any local defining function that is plurisubharmonic on
 near a fixed boundary point.   Nonetheless, this domain
 has
 Diederich-Forn\ae ss index 1.
The conclusion follows from another, related work  by Diederich and
Fornaess \citep{DF77c}, where
they showed the Diederich-Forn\ae ss index is 1 if the pseudoconvex
domain is {\em regular}, see Definition 1 and Theorem 1 in
\citep{DF77c}.

The main goal of this paper  to extend 
Forn\ae ss--Herbig's result. More precisely, we would like to address the
following questions: 

\begin{questions*}
	\begin{enumerate}
		\item\label{1} Can one find a more general condition than
                  plurisubharmonicity of a defining function on the boundary
                  to guarantee the Diederich--Forn\ae ss index is $1$?
                  Possibly, this condition should cover the example of Behrens.  
		\item On the other hand, how can one realize the condition from a geometric point of view? 
		\item Can one find a
                  bounded pseudoconvex domain admitting
                  Diederich--Forn\ae ss index 1, of which the fact is
                  not discovered by formerly known theorems. In other words, we want to see a new application of the condition we found in Question \ref{1} and this application should be new to us.
	\end{enumerate}
\end{questions*}

The Question 1 is necessary to the Diederich--Forn\ae ss index,
because the condition of Forn\ae ss--Herbig is not sharp. We need
to find a sufficient condition cover the example of Behrens at
least. Indeed, the following theorem is an extension of Forn\ae
ss--Herbig's theorem. The proof will be in Section
\ref{sec3}. Please also have a look at Section \ref{pre} and Section
\ref{SC} for basic notations. 

\begin{thm}\label{prop}
	Let $\Omega$ be a bounded domain with smooth boundary in $\mathbb{C}^2$. Let $\Sigma$ denote the Levi-flat set in $\partial\Omega$. Assume that there exists a defining function $\rho$ of $\Omega$ such that, on $\Sigma$, we have the condition $\Hessian_r(L, N)=0$ where $L$ is the normalized holomorphic tangential vector field of $\partial\Omega$ and $N$ is the normalized complex normal vector field of $\partial\Omega$. Then the Diederich-Forn\ae ss index of $\Omega$ is $1$.
\end{thm}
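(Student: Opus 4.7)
The natural strategy is to construct, for each $\eta \in (0,1)$, a positive smooth function $h_\eta$ on $\overline{\Omega}$ so that $\rho_\eta := h_\eta\rho$ is a defining function of $\Omega$ for which $-(-\rho_\eta)^\eta$ is plurisubharmonic on a collar of $\partial\Omega$; a $\max$--gluing with any existing Diederich--Forn\ae ss exhaustion then produces a global bounded plurisubharmonic exhaustion of exponent $\eta$, and letting $\eta\uparrow 1$ settles the theorem. Writing $h_\eta = e^{-t\phi}$ with $\phi \in C^\infty(\overline\Omega)$ real and $t>0$ to be chosen, a direct computation of the complex Hessian of $-(-\rho_\eta)^\eta$ reduces plurisubharmonicity to the pointwise inequality
\begin{equation*}
  \partial\bar\partial\rho_\eta(X,\bar X) \;+\; \tfrac{1-\eta}{-\rho_\eta}\,\bigl|\partial\rho_\eta(X)\bigr|^{2} \;\ge\; 0 \qquad \text{for every } X \in T^{1,0}\mathbb{C}^{2}.
\end{equation*}

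Fix a boundary point $p$ and decompose $X = \alpha L + \beta N$ with $L,N$ as in the hypothesis. At a strongly pseudoconvex point the Levi term $|\alpha|^{2}\partial\bar\partial\rho(L,\bar L)$ dominates every other contribution, so the inequality holds with room to spare for every $\eta<1$ and every bounded $\phi$; the analysis therefore concentrates on the Levi-flat set $\Sigma$, where the $(L,\bar L)$ entry of $\partial\bar\partial\rho$ vanishes. Expanding $\partial\bar\partial\rho_\eta$ in the $\{L,N\}$ basis on $\Sigma$ produces a $2\times 2$ Hermitian matrix whose $(N,\bar N)$ entry is absorbed by the positive term $\tfrac{1-\eta}{-\rho_\eta}|\partial\rho_\eta(N)|^{2}$ and whose $(L,\bar L)$ entry is controlled by taking $t$ large. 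The genuine obstruction is the mixed $(L,\bar N)$ entry: as $\eta \uparrow 1$, the absorbing term loses the capacity to dominate it.

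This is where the hypothesis $\Hessian_\rho(L,N) = 0$ on $\Sigma$ intervenes. It forces the vanishing on $\Sigma$ of the mixed second derivatives of $\rho$ in the plane spanned by $L$ and $N$, so that after cancellation the surviving contribution to $\partial\bar\partial\rho_\eta(L,\bar N)$ there depends only on $L\phi$, $\bar N\phi$, and first-order data of $\rho$. The differential-equations step alluded to in the abstract is then to determine $\phi$ so that $L\phi$ solves a first-order linear equation along $\Sigma$ making the residual cross term vanish identically on $\Sigma$; such a $\phi$ can be produced by integrating along the one-dimensional Levi foliation of $\Sigma$ (or, equivalently, by solving a $\bar\partial_b$-type equation along the leaves) and smoothly extended to $\overline\Omega$.

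With $\phi$ in hand, the Hermitian matrix on $\Sigma$ is diagonal and non-negative for $t$ sufficiently large and $\eta$ sufficiently close to $1$; a Taylor expansion transverse to $\Sigma$, together with strict positivity of the Levi form off $\Sigma$, propagates the inequality to a one-sided neighborhood of $\partial\Omega$. I expect the main obstacle to be the construction of $\phi$: without the Hessian hypothesis, the $(L,\bar N)$ entry of $\partial\bar\partial\rho$ persists at first order on $\Sigma$ and no first-order modification of the defining function can eliminate it; the hypothesis is precisely what reduces the problem to a solvable linear ODE along the Levi leaves and thereby makes $\eta = 1$ accessible in the limit.
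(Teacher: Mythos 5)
Your sketch follows the right general shape (weight $e^{-t\phi}$, strongly pseudoconvex points harmless, attention focused on $\Sigma$), but it misplaces the difficulty and omits the idea that actually makes the theorem work. First, the step you present as the heart of the argument --- solving a first-order equation along $\Sigma$ so that the mixed $(L,\bar N)$ entry vanishes --- is not needed here: the vanishing $\Hessian_r(L,N)=0$ on $\Sigma$ is the \emph{hypothesis} of Theorem \ref{prop}. (That transport-equation construction is the content of Theorem \ref{main} and Proposition \ref{main2}, which reduce a geometric condition to the hypothesis of this theorem; moreover in the paper it is solved off a curve transversal to $L$, not along Levi leaves, where solvability of $Lu=h$ is not guaranteed.) Second, your treatment of the $(L,\bar L)$ direction --- ``controlled by taking $t$ large,'' then propagated off $\Sigma$ by Taylor expansion and strict pseudoconvexity away from $\Sigma$ --- does not close. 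On $\Sigma$ itself the $(L,\bar L)$ entry of the Hessian of $\rho e^{-t\phi}$ equals $e^{-t\phi}\Hessian_\rho(L,L)=0$ for \emph{every} $t$ and $\phi$ (since $\rho=0$ and $L\rho=0$ there), so no choice of $t$ helps at those points; the real danger is just inside the domain, where $\Hessian_r(L,L)$ can become negative at first order in $|r|$ because its inward normal derivative, of size $\mathcal{K}|N\Hessian_r(L,L)|$, need not vanish at Levi-flat points. This loss, of order $|r|$, must be dominated by an order-$|r|$ gain from the weight, uniformly as $\eta\to1$ (i.e.\ for arbitrarily small $\delta$), and also uniformly at nearby weakly pseudoconvex points where the Levi form is positive but arbitrarily small --- a regime your appeal to ``strict positivity off $\Sigma$'' does not cover.

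The paper closes exactly this gap with the Forn\ae ss--Herbig weight $\psi=-C|\Hessian_r(L_r,N_r)|^2$. Lemma \ref{3.1} (which uses the hypothesis together with the flatness identity $L_r\Hessian_r(N_r,L_r)=N_r\Hessian_r(L_r,L_r)$ on $\Sigma$) gives two properties that a generic $\phi$ scaled by large $t$ does not have: $L\psi=0$ on $\Sigma$, so the cross term $I\!I$ stays of size $\delta\eta|L(|z|^2)\overline N(r)|$ and is absorbed as in (\ref{1/4}); and $\Hessian_\psi(L,L)\le-C|N\Hessian_r(L,L)|^2$ on $\Sigma$, so that the inward loss is beaten via $\mathcal{K}|N\Hessian_r(L,L)|-C|N\Hessian_r(L,L)|^2\le\mathcal{K}^2/(4C)\le\delta/8$ for $C$ large, yielding $I<-\tfrac14\delta e^\psi(-r)$ near $\Sigma$. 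Note that if instead $L\phi\neq0$ on $\Sigma$, the cross term picks up a contribution of size $t|L\phi||\overline N(r)|$, and weighing it against $I\!I\!I\sim-\tfrac{1-\eta}{2(-r)}|N(r)|^2$ and the order-$|r|$ gain in $I$ forces roughly $t|L\phi|^2\lesssim(1-\eta)$, which destroys the limit $\eta\to1$; and without the quantitative negativity of $\Hessian_\psi(L,L)$ tied to $|N\Hessian_r(L,L)|^2$, the first-order inward loss is not dominated at all. So the proposal, as written, is missing the decisive choice of weight and the normal-derivative estimate; supplying them is essentially reproducing the paper's proof.
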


\begin{remark}
	In practice, we do not need to assume that the $L, N$ are normalized vectors. This is because $\Hessian_\rho (L,N)$ is tensorial, that is, $\Hessian_\rho (fL,gN)=f\bar{g}\Hessian_\rho (L,N)=0$ for arbitrary functions $f$ and $g$.
\end{remark}

The preceding theorem not only extends Forn\ae ss--Herbig's theorem, but also relates more geometric informations to the index. This connects the Deiderich--Forn\ae ss to Question 2. For this aim, we have to introduce some of our conventions. Namely, we will call a simple curve a \textit{real curve} if it can be parametrized by a smooth map $\Psi: t\mapsto\mathbb{C}^2$. Also, for the definition and discussion of transversality, see Section \ref{pre}. 

We are ready to answer Question 2 with a series of results as what follows. All of these will be discussed in Section \ref{sec4}.
\begin{thm}\label{main}
	Let $\Omega$ be a bounded domain with smooth boundary in $\mathbb{C}^2$. 
	Let $\Sigma$ denote the set of Levi-flat points in $\partial\Omega$. 
	Assume that $\Sigma$ is a real curve and transversal to the holomorphic tangent vector of $\partial\Omega$. 
	Then the Diederich-Forn\ae ss index of $\Omega$ is $1$.
\end{thm}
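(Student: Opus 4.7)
The plan is to reduce Theorem \ref{main} to Theorem \ref{prop} by producing a defining function $\rho$ with $\Hessian_\rho(L,N)=0$ on $\Sigma$. I would start with any smooth defining function $\rho_0$ of $\Omega$ (say the signed distance) and seek $\rho=h\rho_0$ for a smooth, strictly positive function $h$ on a neighborhood of $\overline{\Omega}$. A direct computation, using $\rho_0\equiv 0$ on $\partial\Omega$ and $L\rho_0=0$, gives on the boundary
\[
\Hessian_{h\rho_0}(L,N)\big|_{\partial\Omega}=(Lh)(\bar N\rho_0)+h\,\Hessian_{\rho_0}(L,N),
\]
so if I impose $h|_\Sigma\equiv 1$, then $\Hessian_\rho(L,N)|_\Sigma=0$ is equivalent to the first-order prescription
\[
Lh\big|_\Sigma=-\frac{\Hessian_{\rho_0}(L,N)}{\bar N\rho_0}\bigg|_\Sigma,
\]
whose right-hand side is a well-defined smooth complex-valued function on $\Sigma$.

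Next, I would solve this prescription using the transversality hypothesis. Writing $L=X+iY$ with $X,Y$ real vector fields tangent to $\partial\Omega$, the equation amounts to prescribing the two real derivatives $Xh|_\Sigma$ and $Yh|_\Sigma$. Transversality of $\Sigma$ to the holomorphic tangent is precisely the statement that the real $2$-plane $\operatorname{span}_{\mathbb{R}}\{X,Y\}$ is nowhere contained in $T\Sigma$, so $\{X,Y\}$ together with $T\Sigma$ span $T\partial\Omega$ along $\Sigma$. In a tubular neighborhood of $\Sigma$ in $\partial\Omega$ I can therefore introduce coordinates $(\sigma,u,v)$, with $\sigma$ parametrizing $\Sigma$ and $\{u=v=0\}=\Sigma$, so that the matrix of components of $X,Y$ in the $(\partial_u,\partial_v)$ basis is invertible at every point of $\Sigma$. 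The ansatz $h(\sigma,u,v)=1+a(\sigma)u+b(\sigma)v$ then turns the prescription into a pointwise invertible $2\times 2$ linear system for $(a(\sigma),b(\sigma))$, which admits a unique smooth solution along $\Sigma$.

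Finally, I would multiply this local $h$ by a bump function supported in a sufficiently thin tubular neighborhood of $\Sigma$ and extend by $1$ outside that neighborhood. Shrinking the neighborhood keeps $h$ as close to $1$ as desired, so $h$ remains strictly positive on $\overline{\Omega}$, while the first-order values of $h$ along $\Sigma$ are preserved. Then $\rho:=h\rho_0$ is a smooth defining function for $\Omega$ satisfying $\Hessian_\rho(L,N)|_\Sigma=0$, and applying Theorem \ref{prop} yields that the Diederich--Forn\ae ss index of $\Omega$ equals $1$.

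The hard step will be the construction in the second paragraph: converting pointwise transversality into a smooth, globally defined, strictly positive $h$ whose first-order boundary behavior along $\Sigma$ realizes the required complex derivative. Transversality collapses the problem to a pointwise invertible $2\times 2$ linear system along the $1$-manifold $\Sigma$, so no integrability obstruction arises; any global subtlety coming from multiple connected components of $\Sigma$, or from a possibly non-trivial normal bundle of $\Sigma$ in $\partial\Omega$, is handled by a standard partition-of-unity argument over a finite open cover of $\Sigma$.
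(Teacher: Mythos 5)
Your proposal is correct and takes essentially the same route as the paper: reduce to Theorem \ref{prop} by multiplying a given defining function by a positive factor whose first-order behavior along $\Sigma$ kills $\Hessian_\rho(L,N)$ there, with transversality of $\Sigma$ to $\Re L,\Im L$ guaranteeing that the resulting prescription of the two real derivatives of the (real) factor along the curve is solvable, and then cutting off away from $\Sigma$. The only difference is packaging: the paper writes the new defining function as $\delta e^\phi$ and solves $L\phi=-\Hessian_\delta(L,N)/\overline{N}(\delta)$ via its complex transport equation (Proposition \ref{cprop}) plus the extension lemma, while you solve the same pointwise $2\times 2$ system by an affine ansatz in tubular coordinates — a cosmetic rather than substantive distinction.
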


\begin{remark}\label{rmk}
In particular, as a consequence we obtain that if
 $\Omega$ be a bounded domain with smooth boundary in $\mathbb{C}^2$,
and  the set of Levi-flat points in $\partial\Omega$
$\Sigma$ is a set of isolated points, then
 the Diederich-Forn\ae ss index of $\Omega$ is $1$.

	In fact, it is not hard to see also that if the set of Levi-flats points consists of finite many isolated points and
	finite many disjoint real curves transversal to the holomorphic vector fields, then the Diederich-Forn\ae ss index is 1.
\end{remark}


Moreover, Theorem \ref{main} is a special case of the following proposition. Indeed, Proposition \ref{main2} describes geometry of the Levi-flat sets by existence of solution to a type of partial differential equations.

\begin{prop}\label{main2}
	Let $\delta$ be an arbitrarily defining function of a bounded domain $\Omega\subset\mathbb{C}^2$ with smooth boundary. Let $\Sigma\subset\partial\Omega$ denote the Levi-flat sets of $\partial\Omega$. Suppose there is a real function $u$ which solves \[L(u)=-\frac{\Hessian_\delta(L, N)}{\|\nabla\delta\|}\] on $\Sigma$. Then the Diederich-Forn\ae ss index of $\Omega$ is $1$.
\end{prop}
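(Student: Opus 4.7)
My plan is to reduce Proposition \ref{main2} to Theorem \ref{prop} by constructing, from $\delta$ and the given solution $u$, a new defining function $\rho$ for which $\Hessian_\rho(L,N)=0$ on $\Sigma$. The natural ansatz is $\rho:=\delta\,e^{\tilde u}$, where $\tilde u$ is a smooth real extension of $u$ to a neighborhood of $\overline{\Omega}$ in $\mathbb{C}^2$; any smooth extension agreeing with $u$ on $\partial\Omega$ near $\Sigma$ will do, since $L$ is tangent to $\partial\Omega$ and hence $L(\tilde u)|_\Sigma$ depends only on the restriction of $\tilde u$ to $\partial\Omega$. The extension off the boundary is then a routine partition-of-unity step. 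Because $e^{\tilde u}>0$, $\rho$ is again a smooth defining function of $\Omega$, and on $\partial\Omega$ we have $\partial\rho = e^{\tilde u}\partial\delta$ since $\delta=0$ there. Therefore $L$ remains a holomorphic tangent vector field of $\partial\Omega$, and $N$ is unchanged up to the positive scalar $e^{\tilde u}$; by the tensoriality remark following Theorem \ref{prop}, such positive rescalings do not affect whether $\Hessian_\rho(L,N)$ vanishes.

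The core computation is the Leibniz expansion
\[
\partial\bar\partial\rho = e^{\tilde u}\,\partial\bar\partial\delta + \partial\delta\wedge\bar\partial e^{\tilde u} + \partial e^{\tilde u}\wedge\bar\partial\delta + \delta\,\partial\bar\partial e^{\tilde u}.
\]
Restricting to $\partial\Omega$ kills the last term, and evaluating the first mixed wedge product on the pair $(L,\bar N)$ gives zero by holomorphic tangency $L(\delta)=0$. The surviving cross term contributes $L(e^{\tilde u})\,\overline{N(\delta)} = e^{\tilde u}\,L(\tilde u)\,\|\nabla\delta\|$, once the normalization of $N$ is fixed so that $\overline{N(\delta)}=\|\nabla\delta\|$, matching the denominator in the hypothesis. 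Assembling the pieces,
\[
\Hessian_\rho(L,N) = e^{\tilde u}\bigl[\Hessian_\delta(L,N) + \|\nabla\delta\|\,L(\tilde u)\bigr]
\]
identically on $\partial\Omega$.

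Inserting the hypothesis $L(u) = -\Hessian_\delta(L,N)/\|\nabla\delta\|$ on $\Sigma$, together with the identity $L(\tilde u)|_\Sigma = L(u)|_\Sigma$ (valid because $\tilde u$ agrees with $u$ on $\partial\Omega$ near $\Sigma$ and $L$ is boundary-tangent), makes the bracket vanish identically on $\Sigma$. Thus $\Hessian_\rho(L,N)=0$ on $\Sigma$, so $\rho$ satisfies the hypothesis of Theorem \ref{prop}, and that theorem immediately yields the conclusion that the Diederich--Forn\ae ss index of $\Omega$ equals $1$.

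The only non-routine point I anticipate is a bookkeeping one: matching the normalization of $N$ so that $\overline{N(\delta)}$ comes out to precisely $\|\nabla\delta\|$, as it appears in the hypothesis. Since all the relevant quantities are tensorial under rescaling of $L$ and $N$, this can be pinned down uniformly at the outset, and no substantial analytic or geometric obstacle is hidden in the remainder of the argument; the proof really does reduce to the Leibniz expansion above together with an appeal to Theorem \ref{prop}.
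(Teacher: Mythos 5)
Your proposal is correct and takes essentially the same route as the paper: there, too, one sets $\rho=\delta e^{\phi}$ with $\phi$ a cut-off smooth extension of the given solution, uses the boundary identity $\Hessian_{\delta e^{\phi}}(L,N)=e^{\phi}\bigl(\Hessian_\delta(L,N)+L(\phi)\overline{N}(\delta)\bigr)$ to conclude $\Hessian_\rho(L,N)=0$ on $\Sigma$, and then invokes Theorem \ref{prop}. The only cosmetic difference is your explicit reconciliation of $\overline{N}(\delta)$ with $\|\nabla\delta\|$, a normalization the paper leaves implicit.
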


In Section \ref{sec5}, we construct a specific bounded pseudoconvex domain $\tilde{\Omega}$ to answer Question 3. We remind the reader that our example cannot be verified by all known theorems except ours. Finally, Theorem \ref{example} gives a satisfaction answer.

Before we proceed to prove our theorems, we briefly mention some history here and from it, one can have a full picture of the other extreme cases in which the Diederich-Forn\ae ss index is away from $1$. In 1977, Diederich-Forn\ae ss found a domain called the worm domain in \citep{DF77a} 
which gives a non-trivial Diederich-Forn\ae ss index (i.e., an index strictly between
0 and 1). In fact, they show that the Diederich-Forn\ae ss exponent
can be arbitrarily close to $0$, see \citep{DF77a}. 

In 1992, Barrett showed in \citep{Ba92}, that the Bergman projection $P$ on $\Omega_\beta$ does not map the Sobolev space $W^k(\Omega_\beta)$ into $W^k(\Omega_\beta)$ when $k\geq\pi/(2\beta-\pi)$. In 2000, Berndtsson and Charpentier showed, in \citep{BC00}, that the Bergman projection $P$ on $\Omega_\beta$ does map Sobolev space $W^k(\Omega_\beta)$ into $W^k(\Omega_\beta)$ when $k< \tau/2$ where $\tau$ is a Diederich-Forn\ae ss exponent. As a consequence, the Diederich-Forn\ae ss index of $\Omega_\beta$ is less or equal to
$2\pi/(2\beta-\pi)$. The reader can also deduce this result from
Krantz and Peloso \citep{KP08}. Indeed, Theorem 6 in \citep{DF77a}
says that if the standard defining function of $\Omega_\beta$ has
exponent $\leq\eta$, then all other defining functions have exponent
$\leq\eta$, that is, the Diederich-Forn\ae ss index of
$\Omega_\beta\leq\eta$. Thus, the calculation in \citep{KP08} shows
that the $\Omega_\beta\leq\pi/(2\beta-\pi)$. Recently Fu and Shaw and
Adachi and Brinkschulte proved independently in \citep{FS14} and
\citep{AB14} respectively that, roughly speaking, if a relatively
compact domain in a complex manifold has all boundary points
Levi-flat, then the Diederich-Forn\ae ss index is non-trivial. Also,
two papers of Herbig--McNeal in
\citep{HM12a} and \citep{HM12b} include some interesting results.

\section{Preliminaries}\label{pre}
We begin by fixing some basic notation. Let $M$ be a Hermitian manifold with complex structure $J$ and metric $g$. For a real tangent vector fields $X$ we define  \[Z=\frac{1}{2}(X-\sqrt{-1}JX)\] to be a holomorphic tangent vector field and \[\overline{Z}=\frac{1}{2}(X+\sqrt{-1}JX)\] to be an anti-holomorphic tangent vector field. Recall that, if $f$ is a function defined on $M$, then  \[Zf=g\left(\nabla f, \bar{Z}\right)=g\left(Z, \nabla f\right).\]

We also define the Hessian of a function $f$ on real tangent vector fields:
\[\Hessian_f(X, Y)=g(\nabla_X\nabla f, Y)=Y(Xf)-(\nabla_YX)f,\] and for holomorphic tangent vectors we calculate as follows:
\[\Hessian_f(Z, W)=g(\nabla_Z\nabla f, W)=Z(\overline{W}f)-\nabla_Z\overline{W}f=\overline{\Hessian_f(W,Z)}.\]
We can also write the gradient in complex notation. Namely, \[\nabla f=2\left(\frac{\partial f}{\partial z}\frac{\partial}{\partial\bar{z}}+\frac{\partial f}{\partial \bar{z}}\frac{\partial}{\partial z}+\frac{\partial f}{\partial w}\frac{\partial}{\partial\bar{w}}+\frac{\partial f}{\partial \bar{w}}\frac{\partial}{\partial w}\right).\]

If the sectional curvature of $M$ vanishes, then we have that the curvature tensor vanishes which means
\[R_m(Z_1, Z_2, Z_3, Z_4)\equiv0,\] where $R_m$ denotes the curvature tensor.
That means \[0\equiv \nabla_{Z_1}\nabla_{Z_2} Z_3-\nabla_{Z_2}\nabla_{Z_1} Z_3-\nabla_{[Z_1,Z_2]} Z_3,\] for arbitrary holomorphic tangent fields $Z_1, Z_2, Z_3$ of $M$. For the basic notion of curvatures see \citep{Pe06}.

From now on, we work on a domain in $\mathbb{C}^2$ and discuss transversality. Recall that a tangent vector of $\mathbb{C}^2$
\[L=f_1(z, w)\frac{\partial}{\partial z}+f_2(z, w)\frac{\partial}{\partial \bar{z}}+g_1(z,w)\frac{\partial}{\partial w}+g_2(z,w)\frac{\partial}{\partial \bar{w}}\]
indeed defines two real tangent vectors:
\[
2\Re L:=\Re (f_1+f_2)\frac{\partial}{\partial x}+\Im(f_1-f_2)\frac{\partial}{\partial y}+\Re(g_1+g_2)\frac{\partial}{\partial u}+\Im(g_1-g_2)\frac{\partial}{\partial v}
\]
and 
\[
2\Im L:=\Im (f_1+f_2)\frac{\partial}{\partial x}+\Re(f_2-f_1)\frac{\partial}{\partial y}+\Im(g_1+g_2)\frac{\partial}{\partial u}+\Re(g_2-g_1)\frac{\partial}{\partial v},
\]

where we let the coordinate be \[z=x+yi, \text{   and   }w=u+vi.\]

Sometimes $\Re L$ and $\Im L$ are linearly dependent. But, for a nonzero holomorphic tangent vector \[L=f(z, w)\frac{\partial}{\partial z}+g(z,w)\frac{\partial}{\partial w},\]
$\Re L$ and $\Im L$ are always independent because of the following easy lemma.

\begin{lemma}\label{wellde}
Let \[V=f_1(z, w)\frac{\partial}{\partial z}+f_2(z, w)\frac{\partial}{\partial \bar{z}}+g_1(z,w)\frac{\partial}{\partial w}+g_2(z,w)\frac{\partial}{\partial \bar{w}},\] be a complex vector field. If $\Re V$ and $\Im V$ are linearly dependent, then \[|f_1|=|f_2|\quad\text{ and }\quad |g_1|=|g_2|.\] In particular, if $V=L$ is a holomorphic vector field with $\Re L$ and $\Im L$ linearly dependent, then $L=0$.
\end{lemma}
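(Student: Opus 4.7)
The plan is a direct coordinate computation. I would write $f_j = a_j + ib_j$ and $g_j = c_j + id_j$ with real components, so that the formulas for $2\Re V$ and $2\Im V$ displayed just above the lemma identify these two real vector fields with the tuples
\begin{equation*}
(a_1+a_2,\, b_1-b_2,\, c_1+c_2,\, d_1-d_2) \ \text{ and } \ (b_1+b_2,\, a_2-a_1,\, d_1+d_2,\, c_2-c_1)
\end{equation*}
in the basis $(\partial_x,\partial_y,\partial_u,\partial_v)$. Linear dependence $\alpha\Re V+\beta\Im V=0$ with $(\alpha,\beta)\neq (0,0)$ then splits into two subcases. If $\Re V = 0$, the first tuple vanishes, giving $a_1=-a_2$, $b_1=b_2$, and analogous equalities for $c,d$; both conclusions $|f_1|=|f_2|$ and $|g_1|=|g_2|$ are immediate. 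Otherwise I set $\lambda=-\alpha/\beta$ so that $\Im V = \lambda\Re V$, producing
\begin{equation*}
b_1+b_2 = \lambda(a_1+a_2), \qquad a_2-a_1 = \lambda(b_1-b_2),
\end{equation*}
together with the analogous pair obtained by replacing $(a_j,b_j)$ with $(c_j,d_j)$.

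The crux is the algebraic identity
\begin{equation*}
|f_1|^2-|f_2|^2 = (a_1-a_2)(a_1+a_2) + (b_1-b_2)(b_1+b_2).
\end{equation*}
Substituting the two relations above, the right-hand side collapses to $-\lambda(b_1-b_2)(a_1+a_2) + \lambda(b_1-b_2)(a_1+a_2) = 0$. This forces $|f_1|=|f_2|$, and the same substitution applied to the $(c,d)$-coefficients gives $|g_1|=|g_2|$.

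The \emph{in particular} part is then immediate: if $V=L$ is holomorphic, then $f_2=g_2=0$ identically, so $|f_1|=|f_2|=0$ and $|g_1|=|g_2|=0$, forcing $L=0$. I do not anticipate a genuine obstacle here -- the argument is elementary linear algebra essentially forced by the coefficient formulas that precede the lemma. The only point requiring mild care is treating the degenerate branch $\Re V \equiv 0$ separately, so that the scalar $\lambda$ is well defined throughout the main computation.
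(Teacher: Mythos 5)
Your proof is correct and is essentially the paper's argument in a slightly different dress: the paper phrases the linear dependence as the vanishing of the two $2\times 2$ minors of the pair of real tuples, and computes these minors to be $|f_2|^2-|f_1|^2$ and $|g_2|^2-|g_1|^2$, which is exactly the identity you obtain by substituting the proportionality relations with $\lambda$ (your case split $\Re V\equiv 0$ versus $\Im V=\lambda\Re V$ is avoided by the minor formulation, but the computation is the same). No gap; the \emph{in particular} step matches the paper's remark that $f_2=g_2=0$ for a holomorphic field.
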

\begin{proof}
	Give that \[V=f_1(z, w)\frac{\partial}{\partial z}+f_2(z, w)\frac{\partial}{\partial \bar{z}}+g_1(z,w)\frac{\partial}{\partial w}+g_2(z,w)\frac{\partial}{\partial \bar{w}},\]
	and that \[\begin{pmatrix}
	\Re (f_1+f_2)\\\Im(f_1-f_2)\\\Re(g_1+g_2)\\\Im(g_1-g_2)
	\end{pmatrix}\quad\text{and}\quad\begin{pmatrix}
	\Im (f_1+f_2)\\\Re(f_2-f_1)\\\Im(g_1+g_2)\\\Re(g_2-g_1)
	\end{pmatrix}\]
	are  linearly dependent. Hence, both of the following two determinants 
\[\begin{vmatrix}
	\Re (f_1+f_2)&  \Im (f_1+f_2)\\ \Im(f_1-f_2) & \Re(f_2-f_1)
\end{vmatrix}\quad\text{or}\quad\begin{vmatrix}
\Re(g_1+g_2)&\Im(g_1+g_2)\\ \Im(g_1-g_2) &\Re(g_2-g_1)
\end{vmatrix}\]
have to be zero.

By straightforward calculation, \[\begin{vmatrix}
\Re (f_1+f_2)&  \Im (f_1+f_2)\\ \Im(f_1-f_2) & \Re(f_2-f_1)
\end{vmatrix}=|\Re f_2|^2-|\Re f_1|^2-|\Im f_1|^2+|\Im f_2|^2=|f_2|^2-|f_1|^2\]
and
\[\begin{vmatrix}
\Re(g_1+g_2)&\Im(g_1+g_2)\\ \Im(g_1-g_2) &\Re(g_2-g_1)
\end{vmatrix}=|\Re g_2|^2-|\Re g_1|^2-|\Im g_1|^2+|\Im g_2|^2=|g_2|^2-|g_1|^2.\]
This immediately gives \[|f_1|=|f_2|\quad\text{ and }\quad |g_1|=|g_2|.\]
\end{proof}

Lemma \ref{wellde} guarantees that we are able to generalize the notion of \textit{transverality} to nonzero holomorphic tangent vector fields, because $|f_2|=|g_2|=0$.

\begin{definition}
	We say that a real curve $\gamma(t)$ is \textit{transversal} to a nonzero holomorphic tangent vector \[L=f(z, w)\frac{\partial}{\partial z}+g(z,w)\frac{\partial}{\partial w}\] 
	if $\gamma'(t)$, $\Re L$ and $\Im L$ are linear independent.
\end{definition}
It is also easy to see that linear independence is preserved by a diffeomorphism.

\section{Calculation of the D-F Index} \label{SC}\label{sec3}
Let $r$ be an arbitrary defining function of $\Omega$. We want to modify the defining function in order to seek the best one for optimizing the Diederich-Forn\ae ss exponent. Put $\rho=re^{\psi}$, where $\psi$ will be determined later. 

We first introduce some definitions.
	\begin{definition}
		Let $\Omega$ be a bounded domain with smooth boundary in $\mathbb{C}^2$ defined by a smooth defining function $\rho$. The vector field \[L=\frac{1}{\sqrt{|\frac{\partial \rho}{\partial z}|^2+|\frac{\partial \rho}{\partial w}|^2}}(\frac{\partial \rho}{\partial w}\frac{\partial}{\partial z}-\frac{\partial \rho}{\partial z}\frac{\partial}{\partial w})\] on $\partial\Omega$ is called the \textit{normalized holomorphic tangential vector field }, and \[N=\frac{1}{\sqrt{|\frac{\partial \rho}{\partial z}|^2+|\frac{\partial \rho}{\partial w}|^2}}(\frac{\partial \rho}{\partial\bar{z}}\frac{\partial}{\partial z}+\frac{\partial \rho}{\partial\bar{w}}\frac{\partial}{\partial w})\] on $\partial\Omega$ is called the \textit{normalized complex normal vector field}.
	\end{definition}

	Note that, to the fact that $L, N$ are unit vectors, $\Hessian_{|z|^2}(L, L)=\Hessian_{|z|^2}(N, N)=1$. Also $\Hessian_{|z|^2}(L, N)=0$ due to the fact they are orthogonal. Here $|z|^2$ should be read as $|(z, w)|^2$, but for concision, we will not write it as $|(z, w)|^2$. 

The following lemma is proved by a direct calculation. Since the calculation is tedious, we put it in appendix. 

\begin{lemma}\label{marco}
		Let $\Omega, r, L$ and $N$ be as above, and let $\psi$ be a smooth function. Let
		$\eta, \delta>0$.  Then
		\begin{align*}
		& \Hessian_{-(-re^\psi)^\eta e^{-\delta\eta |z|^2}}(aL+bN,aL+bN) \\
		&\qquad = -\eta e^{-\delta\eta |z|^2}(-re^\psi)^{\eta-1}
		\left(|a|^2I+2\Re (a\bar{b}I\!I)+|b|^2I\!I\!I\right) \,,
		\end{align*}
		where, on a sufficiently small neighborhood of $\partial\Omega$ in $\mathbb{C}^2$, 
		\[\begin{split}
		I &= e^\psi\Big((\delta^2\eta)(-r) L(|z|^2)\overline{L} (|z|^2)-
		\delta(-r)+r L(\psi)\overline{L}(\psi)-\Hessian_r(L,L)-r \Hessian_\psi(L, L)\Big),\\
		I\!I\!I
		&< \frac{e^\psi}{2(-r)}(\eta-1)|N(r)|^2
		\end{split}\]
		and on $\partial\Omega$, we have the estimate
		\[|I\!I| <e^\psi\Big(\delta\eta |L(|z|^2)\overline{N}
		(r)|+|L(\psi)\overline{N}(r)|+|\Hessian_r(L, N)|\Big) .
		\]\qed
\end{lemma}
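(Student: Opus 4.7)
The plan is to compute $\Hessian_\Phi(aL+bN,\,aL+bN)$ directly for $\Phi := -(-re^\psi)^\eta e^{-\delta\eta |z|^2}$ and then to identify the three pieces $|a|^2 I$, $2\Re(a\bar b\, II)$, and $|b|^2 III$ produced by sesquilinearity. A common prefactor $-\eta e^{-\delta\eta|z|^2}(-re^\psi)^{\eta-1}$ is pulled out so that what remains inside matches the expressions stated in the lemma.

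To organize the computation, I would differentiate logarithmically. Since $-\Phi = (-r)^\eta e^{\eta\psi}e^{-\delta\eta|z|^2} > 0$ near $\partial\Omega$, we have
\[
\log(-\Phi) = \eta\log(-r)+\eta\psi - \delta\eta|z|^2,
\]
and the identity
\[
\frac{\partial\bar\partial(-\Phi)}{-\Phi} = \partial\bar\partial\log(-\Phi) + \frac{\partial(-\Phi)\,\bar\partial(-\Phi)}{(-\Phi)^2},
\]
together with $\partial\bar\partial\log(-r) = \partial\bar\partial r/r - \partial r\,\bar\partial r/r^2$, expresses $\Hessian_\Phi$ entirely in terms of $\Hessian_r$, $\Hessian_\psi$, $\Hessian_{|z|^2}$ and the first-order quantities $Z(r)$, $Z(\psi)$, $Z(|z|^2)$ for $Z \in \{L,N\}$.

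I would then specialize to the three vector combinations. For $I = \Hessian_\Phi(L,L)$: since $L$ is the normalized holomorphic tangent to $\partial\Omega$ we have $L(r)=0$ on $\partial\Omega$, and extending $L$ off the boundary so as to preserve this (e.g.\ by transporting along the normal direction) makes the identity valid in a tubular neighborhood, killing every $L(r)$-term. Using $\Hessian_{|z|^2}(L,L)=1$ for the piece coming from $-\delta\eta\,\partial\bar\partial|z|^2$ and regrouping produces the five summands listed. For $III = \Hessian_\Phi(N,N)$: here $N(r)\neq 0$ and the leading singular contribution is $(\eta-1)|N(r)|^2/r^2$ coming from the $-\partial r\,\bar\partial r/r^2$ piece combined with the quadratic first-derivative term; once the common prefactor is pulled out and $e^\psi$ extracted, the remaining contributions are bounded near $\partial\Omega$, so on a small enough neighborhood one obtains the strict bound $III < \tfrac{e^\psi(\eta-1)|N(r)|^2}{2(-r)}$, with the factor $1/2$ a consequence of the paper's metric normalization (the factor of $2$ in the definition of $\nabla f$). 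For $II = \Hessian_\Phi(L,N)$ restricted to $\partial\Omega$: applying $L(r)=0$ together with $\Hessian_{|z|^2}(L,N)=0$ (orthogonality of the frame) eliminates the cross terms involving either of these, leaving only the three contributions $\delta\eta\,L(|z|^2)\overline{N}(r)$, $L(\psi)\overline{N}(r)$, and $\Hessian_r(L,N)$, which the triangle inequality bounds as stated.

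The main obstacle is bookkeeping: tracking signs (the $-\Phi$ substitution, the $(\eta-1)$ versus $\eta$ factors, the positivity of $-r$), the correct positions of $r$ versus $(-r)$, and the legitimacy of using $L(r)=0$ as a neighborhood identity for $I$ but only as a boundary identity for $II$. Once the logarithmic-derivative setup and the simplifications $L(r)=0$, $\Hessian_{|z|^2}(L,L)=1$, $\Hessian_{|z|^2}(L,N)=0$ are in hand, the remaining work is a routine but lengthy expansion, which is presumably why the authors defer the full computation to the appendix.
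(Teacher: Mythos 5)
Your overall strategy---expand the complex Hessian of $-(-re^\psi)^\eta e^{-\delta\eta|z|^2}$ (logarithmically or not), pull out the factor $-\eta e^{-\delta\eta|z|^2}(-re^\psi)^{\eta-1}$, and simplify using $\Hessian_{|z|^2}(L,L)=1$, $\Hessian_{|z|^2}(L,N)=0$ and restriction to $\partial\Omega$ for $I\!I$---is the same computation the paper carries out in the appendix. The genuine problem is your treatment of $I$: you propose to re-extend $L$ off $\partial\Omega$ so that $L(r)=0$ holds in a tubular neighborhood and claim this ``kills every $L(r)$-term''. In the paper, $L$ and $N$ are the fields given by the explicit formulas built from $\rho=re^\psi$, so the identity valid on a neighborhood is $L(\rho)=0$, equivalently $L(r)=-rL(\psi)$, \emph{not} $L(r)=0$; and it is precisely this substitution that produces the term $+rL(\psi)\overline{L}(\psi)$ in the stated $I$: from $\Hessian_\rho(L,L)=e^\psi\bigl(\Hessian_r(L,L)+2\Re\bigl(L(\psi)\overline{L(r)}\bigr)+r|L(\psi)|^2+r\Hessian_\psi(L,L)\bigr)$ one gets $2\Re\bigl(L(\psi)\overline{L(r)}\bigr)+r|L(\psi)|^2=-r|L(\psi)|^2$. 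If instead you force $L(r)\equiv 0$, then $L(\rho)=re^\psi L(\psi)\neq 0$ off the boundary, the discarded terms reappear through $L(\rho)$ in the $\partial\rho\,\bar\partial\rho/\rho^2$ and first-order-square pieces, and the exact expression you obtain is $e^\psi\bigl(\delta^2\eta(-r)|L(|z|^2)|^2-\delta(-r)-\eta\, r|L(\psi)|^2+2\delta\eta\, r\,\Re\bigl(L(\psi)\overline{L(|z|^2)}\bigr)-\Hessian_r(L,L)-r\Hessian_\psi(L,L)\bigr)$, which differs from the lemma's $I$ by terms of order $r$ (wrong sign on $r|L(\psi)|^2$ plus an extra cross term). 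So your argument proves a variant of the identity for a different extension of $L$, not the identity claimed; and since the proof of Theorem \ref{prop} uses the inequality derived from $I$ on a full neighborhood with the \emph{same} $L$, $N$, $\psi$ as in Lemma \ref{3.1}, you cannot silently change the extension. The fix is simply to keep the paper's $L$, use $L(\rho)=\overline{L}(\rho)=0$ as the neighborhood identity (e.g.\ via $e^\psi(L\psi)(\overline{L}r)+re^\psi(L\psi)(\overline{L}\psi)=(L\psi)(\overline{L}\rho)=0$), and let the $L(r)$-terms cancel that way.

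Two smaller points. For $I\!I$ your reasoning is fine, because there you only work on $\partial\Omega$, where $r=0$ and $L(r)=0$ genuinely hold. For $I\!I\!I$, the factor $1/2$ has nothing to do with the metric normalization: after the prefactor is removed, the singular term is $\frac{e^\psi(\eta-1)|N(r)|^2}{-r}$ (order $1/|r|$, not $1/r^2$), it is negative when $\eta<1$, and since the remaining terms are bounded and $|N(r)|$ is bounded below near $\partial\Omega$, one absorbs them into half of this term after shrinking the neighborhood; this absorption is where $\eta<1$ is actually needed, and it is the paper's argument.
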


We are ready to define $\psi=-C|\Hessian_r(L_r, N_r)|^2$, where $C>0$ is some number to be determined and  \[L_r=\frac{1}{\sqrt{|\frac{\partial r}{\partial z}|^2+|\frac{\partial r}{\partial w}|^2}}(\frac{\partial r}{\partial w}\frac{\partial}{\partial z}-\frac{\partial r}{\partial z}\frac{\partial}{\partial w})\] and \[N_r=\frac{1}{\sqrt{|\frac{\partial r}{\partial z}|^2+|\frac{\partial r}{\partial w}|^2}}(\frac{\partial r}{\partial\bar{z}}\frac{\partial}{\partial z}+\frac{\partial r}{\partial\bar{w}}\frac{\partial}{\partial w}).\] This definition of $\psi$ is originally due to Forn\ae ss--Herbig in \citep{FH07}. More specifically, they proved the following lemma in \citep{FH07}. Here we rewrite it with a language of differential geometry. For the detail of the proof, please see the Appendix.

\begin{lemma}\label{3.1}
	Assume that \[\Hessian_r(L, N)=0,\] on $\Sigma$, where $\Sigma$ is a subset of $\partial\Omega$.
	Let \[\psi=-C|\Hessian_r(L_r, N_r)|^2\] for arbitrary $C>0$. Then
\begin{equation}\label{eqn0} 
		L_r(\psi)=0,
		\end{equation} 
		and
		\begin{equation}\label{eqnl}
		\Hessian_\psi(L, L)=\Hessian_\psi(L_r,L_r)\leq -C|L_r\Hessian_r(N_r, L_r)|^2=-C|N_r\Hessian_r(L_r, L_r)|^2
		\end{equation} on $\Sigma$.\qed
\end{lemma}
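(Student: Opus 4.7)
The plan is to argue along three lines: identify $L$ and $N$ with $L_r$ and $N_r$ on $\partial\Omega$, use the vanishing of $f:=\Hessian_r(L_r,N_r)$ on $\Sigma$ to kill low-order terms, and finally invoke flatness of $\mathbb{C}^2$ to convert a derivative of $\bar{f}$ into a normal derivative of the Levi form.

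First, I would observe that because $\rho=re^\psi$ and $r=0$ on $\partial\Omega$, one has $\partial\rho=e^\psi\partial r$ along $\partial\Omega$. The common factor $e^\psi$ cancels in the normalizations, so $L=L_r$ and $N=N_r$ on $\partial\Omega$. This instantly gives $\Hessian_\psi(L,L)=\Hessian_\psi(L_r,L_r)$, and also lets me rewrite the hypothesis as $f:=\Hessian_r(L_r,N_r)=0$ on $\Sigma$. For equation \eqref{eqn0}, writing $\psi=-Cf\bar{f}$ and applying $L_r$ yields $L_r(\psi)=-C\bigl(L_r(f)\bar{f}+fL_r(\bar{f})\bigr)$; every summand contains a factor of $f$ or $\bar{f}$, which vanish on $\Sigma$.

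For the first part of \eqref{eqnl}, I would use the formula $\Hessian_\psi(L_r,L_r)=L_r(\overline{L_r}(\psi))-(\nabla_{L_r}\overline{L_r})(\psi)$. The correction term $(\nabla_{L_r}\overline{L_r})(\psi)$ is a first-order derivative of $\psi=-Cf\bar{f}$, so every term in its Leibniz expansion picks up a factor of $f$ or $\bar{f}$ and dies on $\Sigma$. Expanding $L_r\overline{L_r}(\psi)$ by Leibniz and discarding every summand that still carries a factor of $f$ or $\bar{f}$, one is left with
\[
\Hessian_\psi(L_r,L_r)\big|_\Sigma=-C\bigl(\overline{L_r}(f)\,L_r(\bar{f})+L_r(f)\,\overline{L_r}(\bar{f})\bigr).
\]
Since $r$ has real coefficients for $\partial_j\partial_{\bar{k}}r$, the vector field $\overline{L_r}$ satisfies $\overline{L_r}(f)=\overline{L_r(\bar{f})}$ and $\overline{L_r}(\bar{f})=\overline{L_r(f)}$, so the right-hand side collapses to $-C(|L_r(\bar{f})|^2+|L_r(f)|^2)$. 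Dropping the non-negative term $|L_r(f)|^2$ gives the desired inequality, and by definition $|L_r(\bar{f})|^2=|L_r\Hessian_r(N_r,L_r)|^2$.

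The second equality in \eqref{eqnl} is the main obstacle. Here I would lean on the paper's preliminary remark that the Euclidean metric on $\mathbb{C}^2$ is flat, so the curvature tensor vanishes and
\[
\nabla_{L_r}\nabla_{N_r}\nabla r-\nabla_{N_r}\nabla_{L_r}\nabla r-\nabla_{[L_r,N_r]}\nabla r=0.
\]
Pairing with $L_r$ via $g$ and invoking metric compatibility to bring the outer $L_r$ and $N_r$ out of the Hessian, one obtains
\[
L_r\Hessian_r(N_r,L_r)-N_r\Hessian_r(L_r,L_r)=\Hessian_r([L_r,N_r],L_r)+g\bigl(\nabla_{N_r}\nabla r,\nabla_{L_r}L_r\bigr)-g\bigl(\nabla_{L_r}\nabla r,\nabla_{N_r}L_r\bigr),
\]
up to signs and conjugations that have to be tracked carefully. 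The terms on the right expand into sums of $\Hessian_r(\cdot,L_r)$ and $\Hessian_r(\cdot,N_r)$ against the connection coefficients of $L_r$ and $N_r$. Using the Levi-flatness $\Hessian_r(L_r,L_r)=0$ on $\Sigma$ together with the hypothesis $\Hessian_r(L_r,N_r)=0$, each of these correction terms collapses, and what remains is an expression of equal modulus to $N_r\Hessian_r(L_r,L_r)$. The main difficulty is bookkeeping all the commutator and connection terms to verify that every surviving contribution is controlled by either the Levi form or $\Hessian_r(L_r,N_r)$, both of which vanish on $\Sigma$.
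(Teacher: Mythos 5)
The first two parts of your argument are correct and essentially identical to the paper's proof: the identification $L=L_r$, $N=N_r$ on $\partial\Omega$ (so the hypothesis reads $f:=\Hessian_r(L_r,N_r)=0$ on $\Sigma$), the Leibniz argument giving $L_r(\psi)=0$, and the computation $\Hessian_\psi(L_r,L_r)\vert_\Sigma=-C\left(|L_r(f)|^2+|L_r(\bar f)|^2\right)\leq -C|L_r\Hessian_r(N_r,L_r)|^2$ all match the paper (your appeal to ``real coefficients'' is unnecessary, since $\overline{L}_r(\bar f)=\overline{L_r(f)}$ holds for any vector field and function, but the identity you use is true). The genuine gap is the last equality, $|L_r\Hessian_r(N_r,L_r)|=|N_r\Hessian_r(L_r,L_r)|$ on $\Sigma$, which you only sketch and yourself flag as unfinished bookkeeping. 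The point is that this bookkeeping is not routine: after commuting covariant derivatives via flatness, Hermitian metric compatibility $Xg(Y,Z)=g(\nabla_XY,Z)+g(Y,\nabla_{\overline{X}}Z)$ produces the correction terms $\Hessian_r(N_r,\nabla_{\overline{L}_r}L_r)$ and $\Hessian_r(L_r,\nabla_{\overline{N}_r}L_r)$ (note the conjugated directions in the second slot; your displayed formula with $\nabla_{L_r}L_r$ and $\nabla_{N_r}L_r$ is off here), plus a bracket term. If you expand these blindly in the frame $L_r,N_r$, a multiple of $\Hessian_r(N_r,N_r)$ appears, and that entry does \emph{not} vanish on $\Sigma$; so your assertion that ``every surviving contribution is controlled by the Levi form or $\Hessian_r(L_r,N_r)$'' is exactly the statement that requires proof.

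The missing argument, which is the actual content of the paper's proof of this step, runs as follows. Since $\overline{L}_rr\equiv 0$ near $\partial\Omega$, one has $\Hessian_r(L_r,L_r)=-(\nabla_{L_r}\overline{L}_r)r$ and $\Hessian_r(N_r,L_r)=-(\nabla_{N_r}\overline{L}_r)r$; hence on $\Sigma$ (using Levi-flatness for the first identity and the hypothesis for the second) the fields $\nabla_{\overline{L}_r}L_r$ and $\nabla_{\overline{N}_r}L_r$, which are of type $(1,0)$ because the connection is the flat one, annihilate $r$ and are therefore pointwise multiples of $L_r$. Consequently $\Hessian_r(N_r,\nabla_{\overline{L}_r}L_r)$ is a multiple of $\Hessian_r(N_r,L_r)=0$ and $\Hessian_r(L_r,\nabla_{\overline{N}_r}L_r)$ is a multiple of $\Hessian_r(L_r,L_r)=0$, so no $\Hessian_r(N_r,N_r)$ survives; moreover $[L_r,N_r]$ is of type $(1,0)$, hence a combination of $L_r$ and $N_r$, so the bracket term reduces to $\Hessian_r(L_r,L_r)$ and $\Hessian_r(L_r,N_r)$ contributions, which vanish on $\Sigma$. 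With these three facts one obtains the exact identity $L_r\Hessian_r(N_r,L_r)=N_r\Hessian_r(L_r,L_r)$ on $\Sigma$, not merely equality of moduli. Note also that both your sketch and the paper's proof use $\Hessian_r(L_r,L_r)=0$ on $\Sigma$, i.e.\ $\Sigma$ is taken to consist of Levi-flat points, as in the lemma's application; you should state this explicitly rather than treat it as part of the hypothesis $\Hessian_r(L,N)=0$.
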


\begin{remark}
	From the preceding lemma, we can also see that on $\Sigma$
\[
			L(\psi)=0,
\]
			and
		\[
			\Hessian_\psi(L, L)\leq -C|L\Hessian_r(N, L)|^2=-C|N\Hessian_r(L, L)|^2
	\]
because on $\partial\Omega$, $L, N$ coincide with $L_r, N_r$.
\end{remark}

Then, with the notation of Lemma \ref{marco}, we have on $\Sigma$,
\[|I\!I|<e^\psi\big(\delta\eta |L(|z|^2)\overline{N} (r)|\big).\]

Moreover, there must be a neighborhood $\Sigma_\epsilon$, which is dependent on $\epsilon>0$, of $\Sigma$ in $\mathbb{C}^2$, and on the neighborhood, we have
\[|I\!I|<e^\psi\big(3\delta\eta  \max\{L(|z|^2), \epsilon\}|\overline{N} (r)|\big)\]
for some $\epsilon>0$.

We can now prove Theorem \ref{prop}. We want to point out that Behren's counterexample that we mentioned in Section \ref{intro} will not contradict the converse of Theorem \ref{prop}. This is because her example has only one Levi-flat point on the boundary and we will show more generally that, if the Levi-flat points form a real curve (see Theorem \ref{main}), then it satisfies the condition of Theorem \ref{prop}. 

We prove the theorem by modifying the argument of Fornaess--Herbig in \citep{FH07}. Our proof has a few new arguments. We need extra estimates on the points with Levi-forms bounded below. We also need to consider the points which have small positive Levi-forms. 

\begin{proof}[Proof of Theorem \ref{prop}]

Let $\psi$ be defined in Lemma \ref{3.1}. Firstly, we claim that if the Levi-form is bounded below by a positive number $\alpha>0$, then in a neighborhood of these boundary points in $\mathbb{C}^2$ \[\Hessian_{-(-\rho)^\eta}(aL+bN,aL+bN)>0\] holds for any defining function $\rho$ of $\Omega$ and any $0<\eta<1$. We are going to show this fact in the following paragraph.

It is enough to show that the complex Hessian of $-(-\rho)^\eta$ is positive definite in a neighborhood of the strongly pseudoconvex boundary points. Rewrite the complex Hessian of $-(-\rho)^\eta$ with matrices.
\[\begin{split}
\Hessian_{-(-\rho)^\eta}&=\begin{pmatrix}
\Hessian_{-(-\rho)^\eta}(L,L)&\Hessian_{-(-\rho)^\eta}(L,N)\\
\Hessian_{-(-\rho)^\eta}(N,L)&\Hessian_{-(-\rho)^\eta}(N,N)
\end{pmatrix}\\&=\begin{pmatrix}
\eta(-\rho)^{\eta-1}\Hessian_\rho(L, L)&\eta(-\rho)^{\eta-1}\Hessian_\rho(L, N)\\
\eta(-\rho)^{\eta-1}\Hessian_\rho(N, L)&\eta(-\rho)^{\eta-1}\left(\Hessian_\rho(N, N)+\frac{1-\eta}{-\rho}|N\rho|^2\right)\end{pmatrix}\\
&=\eta(-\rho)^{\eta-1}\begin{pmatrix}
\Hessian_\rho(L, L)&\Hessian_\rho(L, N)\\
\Hessian_\rho(N, L)&\Hessian_\rho(N, N)+\frac{1-\eta}{-\rho}|N\rho|^2
\end{pmatrix}
\end{split}\]
is positive definite if and only if $\Hessian_\rho(L, L)>0$ and 
\[\begin{split}
&\begin{vmatrix}
\Hessian_\rho(L, L)&\Hessian_\rho(L, N)\\
\Hessian_\rho(N, L)&\Hessian_\rho(N, N)+\frac{1-\eta}{-\rho}|N\rho|^2
\end{vmatrix}\\=&\Hessian_\rho(L, L)\left(\Hessian_\rho(N, N)+\frac{1-\eta}{-\rho}|N\rho|^2\right)-|\Hessian_\rho(L, N)|^2\\=&\Hessian_\rho(L, L)\Hessian_\rho(N, N)+\Hessian_\rho(L, L)\frac{1-\eta}{-\rho}|N\rho|^2-|\Hessian_\rho(L, N)|^2>0
\end{split}.\]
But this is clear because of $\Hessian_\rho(L, L)\geq\frac{\alpha}{2}$ and \[\Hessian_\rho(L, L)\frac{1-\eta}{-\rho}|N\rho|^2\geq \frac{\alpha}{2}\frac{1-\eta}{-\rho}|N\rho|^2\] in a neighborhood of strongly pseudoconvex boundary points. Indeed, the term $\frac{1-\eta}{-\rho}|N\rho|^2$ can approach $+\infty$ as $\rho$ goes to $0$, while \[\Hessian_\rho(L, L)\Hessian_\rho(N, N)-|\Hessian_\rho(L, N)|^2\] has to be bounded which completes the proof of the fact.

This implies that
\[\Hessian_{-(-re^\psi)^\eta e^{-\delta\eta |z|^2}}(aL+bN,aL+bN)>0.\]
So we just need to prove that \[\Hessian_{-(-re^\psi)^\eta e^{-\delta\eta |z|^2}}(aL+bN,aL+bN)>0\] on a neighborhood of the Levi-flat points in $\mathbb{C}^2$.

Let $\Sigma$ be the Levi-flat subsets of $\partial\Omega$. We learned in Lemma \ref{marco} that there exists a neighborhood of $\partial\Omega$ in $\mathbb{C}^2$ such that, on this neighborhood, \[I\!I\!I<\frac{e^\psi}{-2r}(\eta-1)|N(r)|^2.\] 

To prove
\[\Hessian_{-(-re^\psi)^\eta e^{-\delta\eta |z|^2}}(aL+bN,aL+bN)>0\] on a neighborhood of $\Sigma$ for an appropriate $\delta$, for all $a,b\in\mathbb{C}$ and all $0<\eta<1$, we need to show that \[|a|^2I+|b|^2I\!I\!I<-2|ab||I\!I|\] on a neighborhood of $\Sigma$ in $\mathbb{C}^2$. We have seen on the neighborhood $\Sigma_\epsilon$ of $\Sigma$ in $\mathbb{C}^2$, \[I\!I\!I<\frac{e^\psi}{-2r}(\eta-1)|N(r)|^2\] and \[|I\!I|<e^\psi\big(3\delta\eta  \max\{L(|z|^2), \epsilon\}|\overline{N} (r)|.\] If we can show that, on a neighborhood of $\Sigma$ in $\mathbb{C}^2$,  \[I<-\frac{e^\psi}{4}\delta(-r),\] then we are able to see that \[\Hessian_{-(-re^\psi)^\eta e^{-\delta\eta |z|^2}}(aL+bN,aL+bN)>0.\]
This is because, after shrinking $\delta$ further,
\begin{equation}\label{1/4}
-|a|^2\frac{1}{4}\delta(-r)-|b|^2\frac{1-\eta}{-2r}|N(r)|^2<-|ab||N(r)|\frac{\sqrt{1-\eta}}{2}\sqrt{\delta}<-6|ab|\delta\eta \max\{L(|z|^2), \epsilon\}|\overline{N} (r)|
\end{equation}
for some $\epsilon>0$.

Thus, if we can show that, on a neighborhood of $\Sigma$,
\[
(\delta^2\eta)(-r) L(|z|^2)\overline{L} (|z|^2)-\delta(-r)+r L(\psi)\overline{L}(\psi)-\Hessian_r(L,L)-r \Hessian_\psi(L, L)<-\frac{1}{4}\delta(-r),
\] 
then we are done.

Indeed, after shrinking $\delta$, we do not need to consider $(\delta^2\eta)(-r) L(|z|^2)\overline{L} (|z|^2)$ because it is $o(\delta^2)$ and we can control it to make \[(\delta^2\eta)(-r) L(|z|^2)\overline{L} (|z|^2)=(\delta^2\eta)(-r) |L(|z|^2)|^2<\frac{1}{2}\delta(-r).\]

Hence we just need to show that
\[r(q)L(\psi)\vert_q\overline{L}(\psi)\vert_q-\Hessian_r(L,L)\vert_q-r(q) \Hessian_\psi(L, L)\vert_q<\frac{1}{4}\delta(-r(q))\Hessian_{|z|^2}(L, L)\vert_q\]
for $q\in\Omega$. And thus it is enough to show that
\[r(q)L(\psi)\vert_q\overline{L}(\psi)\vert_q-\Hessian_r(L,L)\vert_q+\Hessian_r(L,L)\vert_p-r(q) \Hessian_\psi(L, L)\vert_q<\frac{1}{4}\delta(-r(q))\]
for $q$ in some neighborhood of $\partial\Omega$ in $\mathbb{C}^2$ and $p\in\partial\Omega$ so that $\distance(q,p)=\distance(q,\partial\Omega)$, because of $\Hessian_r(L,L)\vert_p\geq 0$.

Thus it is enough to show that
\[-L(\psi)\vert_q\overline{L}(\psi)\vert_q-\lim\limits_{q\to p}\frac{\Hessian_r(L,L)\vert_q-\Hessian_r(L,L)\vert_p}{-r(q)+r(p)}+\Hessian_\psi(L, L)\vert_p<\frac{1}{5}\delta\]
on a neighborhood of $\Sigma$ in $\partial\Omega$ where the limit means that $q$ approaches $p$ along the shortest path.

But here

\[\begin{split}
-\lim\limits_{q\to p}\frac{\Hessian_r(L,L)\vert_q-\Hessian_r(L,L)\vert_p}{-r(q)+r(p)}=&\frac{g(\nabla \Hessian_r(L,L), N+\overline{N})}{g(\nabla r, N+\overline{N})}=\frac{2\Re(N\Hessian_r(L,L))}{\|\nabla r\|}\\\leq &\mathcal{K}\Re(N\Hessian_r(L,L))
\end{split}\]
for some $\mathcal{K}>0$.

It is enough to show that
\[\mathcal{K}|N\Hessian_r(L,L)|+\Hessian_\psi(L, L)<\frac{1}{8}\delta\]
on $\Sigma$ because  \[L(\psi)\vert_q\overline{L}(\psi)\vert_q=|L(\psi)\vert_q|^2\] is nonnegative.

We calculate
\[\begin{split}
\mathcal{K}|N\Hessian_r(L,L)|+\Hessian_\psi(L, L)\leq&\mathcal{K}|N\Hessian_r(L,L)|-C|N\Hessian_r(L,L)|^2\\<&\frac{\mathcal{K}^2}{4C},
\end{split}\]
and we take $C$ so that \[\frac{\mathcal{K}^2}{4C}\leq \frac{1}{8}\delta.\] Then we have 
\[\mathcal{K}|N\Hessian_r(L,L)|+\Hessian_\psi(L, L)<\frac{1}{8}\delta,\] which completes the proof.
\end{proof}

\section{Complex Transport Equations}\label{sec4}
In this section, we are going to imitate transport equations in the real sense to the complex sense. It is well known that differential equations are very different in the real and complex contents. Thus, the imitation of transport equation in the complex sense cannot be fully extended.
 
In this section, our aim is to show that, if $L$ is a holomorphic (anti-holomorphic) tangent vector field and $h$ is a smooth complex-valued function, the equation 
\[Lu=h\] is always solvable for real $u$. We prove the following lemma.

\begin{proposition}\label{cprop}
	Let $\epsilon>0$ be arbitrary and $\Gamma$ be a compact (real) smooth curve parametrized by $\gamma: [0,r]\rightarrow \Gamma$. (Here $\gamma(0)=\gamma(r)$ or $\gamma(0)\neq\gamma(r)$.) Let $h$ be a complex-valued smooth function defined on a neighborhood of $\Gamma$ in $\partial\Omega$. 
 Then, the following equation 
	\begin{equation}\label{cyn}
		Lu=h, \quad \text{  on  }\Gamma
	\end{equation} 
	has a real solution, where $u$ is real-valued smooth function defined in the neighborhood of $\Gamma$.
\end{proposition}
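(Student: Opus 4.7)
The plan is to rewrite the complex equation $Lu = h$ as a pair of real directional-derivative constraints along $\Gamma$ and then construct $u$ by solving a rank-$2$ linear system along the curve, extending it off $\Gamma$ by Borel's theorem. Using the setup of Section \ref{pre}, write $L = \tfrac12(X - iJX)$ for a real tangent vector field $X$ on $\partial\Omega$. For a real-valued function $u$ the equation $Lu = h$ on $\Gamma$ is equivalent to the decoupled real system
\[
Xu = 2\Re h,\qquad (JX)u = -2\Im h\qquad\text{on }\Gamma.
\]
Lemma \ref{wellde} ensures that $X$ and $JX$ are pointwise real-linearly independent since $L$ is a nonzero holomorphic vector field, so these are two genuinely independent conditions.

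Next I would work in a tubular-neighborhood chart $(t,y,z)$ of $\Gamma$ in $\partial\Omega$ in which $\Gamma=\{y=z=0\}$ and $t$ is the parameter supplied by $\gamma$. Expanding $X|_\Gamma$ and $(JX)|_\Gamma$ in the frame $(\partial_t,\partial_y,\partial_z)$ gives a $2\times 3$ coefficient matrix $M(t)$ of full rank $2$. Introducing the unknown functions $U(t)=u\circ\gamma(t)$, $P(t)=\partial_y u|_\Gamma$, $Q(t)=\partial_z u|_\Gamma$, the two real equations become an affine system
\[
M(t)\,(U'(t),P(t),Q(t))^{\top} = (2\Re h,\,-2\Im h)^{\top},
\]
whose solution set is a smooth one-parameter affine family at each $t$. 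Once such smooth $(U,P,Q)$ are chosen, Borel's theorem (or the ansatz $u(t,y,z)=U(t)+yP(t)+zQ(t)$ times a transverse cut-off) produces the required real extension to a neighborhood of $\Gamma$.

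For an open arc ($\gamma(0)\neq\gamma(r)$) I would simply make any smooth selection from the one-parameter family, integrate $U'$ to recover $U$, and the construction is complete. The main obstacle is the closed case $\gamma(0)=\gamma(r)$, where the closing condition $\int_0^r U'(t)\,dt=0$ must also be enforced. The one-parameter freedom in $\ker M(t)$ is exactly what is needed: adding a scalar multiple of the $\partial_t$-component of a smooth nowhere-vanishing section of $\ker M(t)$ shifts $\int_0^r U'\,dt$ by a computable nonzero constant, so a single real scalar can be chosen to annihilate the period. To secure that the $\partial_t$-component of the null direction is nonvanishing I would, if necessary, first partition $\Gamma$ into subarcs on which a convenient $2\times 2$ minor of $M(t)$ is invertible, solve on each subarc separately, and then reconcile the local pieces with a partition of unity along $\Gamma$, redistributing any leftover period defect onto a subarc where the tangential adjustment is available. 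With the global $(U,P,Q)$ in hand, the neighborhood extension step described above finishes the proof.
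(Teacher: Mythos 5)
Your reduction of $Lu=h$ to the two real equations $Xu=2\Re h$, $(JX)u=-2\Im h$, and your idea of prescribing the $1$-jet $(U,P,Q)$ of $u$ along $\Gamma$ and extending by $u=U+yP+zQ$, is essentially the paper's strategy. The genuine gap is in the closed-curve case, and it comes from a hypothesis you never invoke (the printed statement omits it, but the paper's proof assumes it explicitly, and it is the setting of Theorem \ref{main}): that $\Gamma$ is \emph{transversal} to $L$, i.e.\ $\gamma'$, $\Re L$, $\Im L$ are linearly independent along $\Gamma$. Independence of $X$ and $JX$ (Lemma \ref{wellde}) only gives $\operatorname{rank}M(t)=2$; it does not control the $\partial_t$-component $k_0(t)$ of the kernel section, which is exactly the transverse $2\times 2$ minor and hence vanishes precisely at points where $\gamma'$ lies in the real span of $\Re L,\Im L$. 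Your period-killing step needs $\int_0^r k_0\,dt\neq 0$ (or, in the patched version, the existence of ``a subarc where the tangential adjustment is available''), and neither is guaranteed by rank $2$ alone, so the fallback is circular: it silently presupposes transversality somewhere. Indeed, without transversality the statement itself fails for closed curves: if $\Gamma$ were a closed integral curve of $X=2\Re L$ and $h$ a nonzero real constant, then $Xu=2h$ along $\Gamma$ would force $\frac{d}{dt}(u\circ\gamma)$ to have nonzero mean, contradicting periodicity of $u\circ\gamma$.

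Once transversality is restored, your kernel adjustments, subarc partition and period bookkeeping are unnecessary: the transverse $2\times2$ block of $M(t)$ is then invertible for every $t$, so one may simply take $U\equiv 0$ and solve pointwise, smoothly and uniquely, for $P(t),Q(t)$; there is no period condition, and the open and closed cases are identical. This is precisely what the paper does: it straightens coordinates so that $\Re L=\partial/\partial s_1$ and $\Im L=\partial/\partial s_2$ along $\Gamma$ (the inverse function theorem applies exactly because $\gamma',\Re L,\Im L$ are independent) and sets $u=h_1(0,0,s_3)s_1+h_2(0,0,s_3)s_2$, i.e.\ $u\equiv 0$ on $\Gamma$ with prescribed transverse derivatives. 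So either add the transversality hypothesis and then delete the period discussion, or note that without it the proposition as literally stated is false; as written, your argument does not close the closed-curve case.
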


\begin{proof}
	Assume $\Gamma$ is parametrized by $\Gamma=\gamma ([0,r])$. We can extend $\Gamma$ smoothly beyond its endpoints if it is not a closed curve in order to give a neighborhood of endpoints to study. In case of a non-closed curve, redefine $\gamma: [-\epsilon, r+\epsilon]\rightarrow\partial\Omega$ and still assume that $\gamma$ is a smooth real curve and $\gamma'$, $\Re L$ and $\Im L$ are independent. (This is feasible if $\epsilon>0$ is small enough.) We prove (\ref{cyn}).  Let $p\in\Gamma$. We find a neighborhood $U_p$ of $p$ and assume $\phi_p: U_p\rightarrow \mathbb{R}^3$ is a diffeomorphism. Then we have in the coordinates, the holomorphic tangent vector \[L=f_1(z,t)\frac{\partial}{\partial z}+f_2(z,t)\frac{\partial}{\partial\bar{z}}+g(z,t)\frac{\partial}{\partial t}\]on the neighborhood. This gives two vectors $\Re L$ and $\Im L$. Possibly after shrinking $U_p$, we can always assume that $\phi_p\circ\gamma(t)=(0,0,t)$ and $\phi_p(\Gamma)$ is contained in the $t$-axis of $\mathbb{R}^3$. We first consider the following equation which admits smooth real solution \[(x(s_1,s_2,s_3),y(s_1,s_2,s_3),t(s_1,s_2,s_3)):\]
	\begin{equation}\label{const1}
	\begin{cases}
	\frac{\partial x}{\partial s_1}=\Re(f_1(0,0,s_3)+f_2(0,0,s_3))\\
	\frac{\partial y}{\partial s_1}=\Im(f_1(0,0,s_3)-f_2(0,0,s_3))\\
	\frac{\partial t}{\partial s_1}=\Re g(0,0,s_3),
	\end{cases}
	\end{equation}
	\begin{equation}\label{const2}
	\begin{cases}
	\frac{\partial x}{\partial s_2}=\Im(f_1(0,0,s_3)+f_2(0,0,s_3))\\
	\frac{\partial y}{\partial s_2}=\Re(f_2(0,0,s_3)-f_1(0,0,s_3))\\
	\frac{\partial t}{\partial s_2}=\Im g(0,0,s_3)
	\end{cases}
	\end{equation}
	
	and $(x(s_1,s_2,s_3),x(s_1,s_2,s_3), t(s_1,s_2,s_3))$ also satisfies the initial conditions:
	\[(x(0,0,s_3),y(0,0,s_3), t(0,0,s_3))=\phi_p\circ\gamma(s_3)=(0,0,s_3).\]

	 To solve (\ref{const1}) and (\ref{const2}) in $\phi_p(U_p)\subset\mathbb{R}^3$, we let 
	\[x(s_1, s_2, s_3)=\Re(f_1(0,0,s_3)+f_2(0,0,s_3))s_1+\Im(f_1(0,0,s_3)+f_2(0,0,s_3))s_2,\]
		\[y(s_1, s_2, s_3)=\Im(f_1(0,0,s_3)-f_2(0,0,s_3))s_1+\Re(f_2(0,0,s_3)-f_1(0,0,s_3))s_2\] and \[t(s_1,s_2,s_3)=s_3+(\Re g(0,0,s_3))s_1+(\Im g(0,0,s_3))s_2.\]We check also the initial condition $(x(0,0,s_3),y(0,0,s_3))=(0,0)$ and $t(0,0,s_3)=s_3$. Hence \[(x(0,0,s_3),y(0,0,s_3), t(0,0,s_3))=(0,0,s_3).\]
		Without loss of generality, we consider $x(s_1,s_2,s_3)$ solving
			\begin{equation}
			\begin{cases}
			\frac{\partial x}{\partial s_1}=\Re(f_1(0,0,s_3)+f_2(0,0,s_3))\\
			\frac{\partial x}{\partial s_2}=\Im(f_1(0,0,s_3)+f_2(0,0,s_3))\\
			\end{cases}
			\end{equation}
with initial condition $x(0,0,s_3)=0$. It is clear that the solution is unique. Hence it can be extended to the whole curve $\gamma([0,r])$ uniquely.
	
	We are going to check the condition of the inverse function theorem on the map \[\left(x(s_1,s_2,s_3), y(s_1,s_2,s_3), t(s_1,s_2,s_3)\right)\]
	at $(0,0, s_3)$. Now
	\[\begin{pmatrix}
	\frac{\partial x}{\partial s_1}&\frac{\partial x}{\partial s_2}&\frac{\partial x}{\partial s_3}\\	\frac{\partial y}{\partial s_1}&\frac{\partial y}{\partial s_2}&\frac{\partial y}{\partial s_3}\\	\frac{\partial t}{\partial s_1}&\frac{\partial t}{\partial s_2}&\frac{\partial t}{\partial s_3}
	\end{pmatrix}\]
	has rank $3$ because $\gamma'(s_3)$, $\Re L$ and $\Im L$ are linearly independent. Hence locally we can define \[h(s_1, s_2,s_3):=h(x(s_1, s_2, s_3), y(s_1,s_2,s_3), t(s_1,s_2,s_3)).\]
	Thus, we define
	\[u(s_1,s_2, s_3)=h_1(0,0,s_3)s_1+h_2(0,0,s_3)s_2.\]
We immediately find that \[\frac{\partial u}{\partial s_1}=h_1(0,0,s_3)\]
and\[\frac{\partial u}{\partial s_2}=h_2(0,0,s_3)\] can be solved uniquely by \[u=h_1(0,0,s_3)s_1+h_2(0,0,s_3)s_2\] given with the initial condition \[u(0,0,s_3)=0.\]

	Since \[\Re L=\Re(f_1+f_2)\frac{\partial}{\partial x}+\Im(f_1-f_2)
	\frac{\partial}{\partial y}+\Re g\frac{\partial}{\partial t},\] on $\Gamma$ locally it can be rewritten as 
	\[\begin{split}
	\Re L=&\Re(f_1(0,0,s_3)+f_2(0,0,s_3))\frac{\partial}{\partial x}+\Im(f_1(0,0,s_3)-f_2(0,0,s_3))
	\frac{\partial}{\partial y}+\Re g(0,0,s_3)\frac{\partial}{\partial t}\\=&\frac{\partial x}{\partial s_1}\frac{\partial}{\partial x}+\frac{\partial y}{\partial s_1}\frac{\partial}{\partial y}+\frac{\partial t}{\partial s_1}\frac{\partial}{\partial t}\\=&\frac{\partial}{\partial s_1}.
	\end{split}\]
	For the same reason \[\Im L=\frac{\partial}{\partial s_2}.\] 
	
	One can see again that $u$ solves \[\frac{\partial u}{\partial s_1}=h_1\] and \[\frac{\partial u}{\partial s_2}=h_2\] with initial condition $u(0,0,s_3)=0$ uniquely. By the inverse function theorem, we can find $u(x,y,t)$ solving $Lu=h$ on $\Gamma$ and such a $u$ is defined on a neighborhood of $\Gamma$ .

\end{proof}

The following extension lemma is classical. One can find it in any book on smooth manifolds, e.g. Lemma 2.26 of \citep{Le13}.

\begin{lemma}[Extension Lemma for Smooth Functions]
	Suppose that $M$ is a smooth manifold with or without boundary, $A\subset M$ is a closed subset, and $f: A\mapsto\mathbb{R}$ is a smooth function. For any open subset $U$ containing $A$, there exists a smooth function $\tilde{f}: M\mapsto\mathbb{R}$ such that $\tilde{f}|_A=f$ and $\mathrm{supp} \tilde{f}\subset U$.
\end{lemma}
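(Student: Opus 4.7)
The statement to prove is a standard partition of unity argument, so the plan is to reduce the global extension problem to a family of local ones and then glue with a smooth partition of unity.

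First, I would unpack the hypothesis. To say $f:A\to\mathbb{R}$ is smooth on the closed set $A$ means, by definition, that for every $p\in A$ there exist an open neighborhood $W_p\subset M$ of $p$ and a smooth function $f_p:W_p\to\mathbb{R}$ with $f_p|_{W_p\cap A}=f|_{W_p\cap A}$. By shrinking $W_p$ if necessary I may assume $W_p\subset U$, since $U$ is an open set containing $A$. Together with $W_0:=M\setminus A$, the family $\{W_p\}_{p\in A}\cup\{W_0\}$ is then an open cover of $M$.

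Next, since $M$ (with or without boundary) is a smooth paracompact Hausdorff manifold, it admits a smooth partition of unity $\{\psi_p\}_{p\in A}\cup\{\psi_0\}$ subordinate to this cover, so that $\mathrm{supp}\,\psi_p\subset W_p$ and $\mathrm{supp}\,\psi_0\subset W_0$, the family of supports is locally finite, and $\sum_p\psi_p+\psi_0\equiv 1$. I would then define
\[
\tilde{f}(q):=\sum_{p\in A}\psi_p(q)\,f_p(q),\qquad q\in M,
\]
where each term is interpreted as $0$ outside $W_p$; local finiteness makes this a finite sum near each point, so $\tilde{f}$ is smooth on $M$.

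Finally, I would verify the two conclusions. For the restriction, fix $q\in A$: since $\mathrm{supp}\,\psi_0\subset W_0=M\setminus A$ is disjoint from $A$, we have $\psi_0(q)=0$, hence $\sum_{p}\psi_p(q)=1$; and whenever $\psi_p(q)\ne 0$ we have $q\in W_p$, so $f_p(q)=f(q)$. Therefore $\tilde{f}(q)=f(q)\sum_p\psi_p(q)=f(q)$. For the support, $\mathrm{supp}\,\tilde{f}\subset\overline{\bigcup_p\{\psi_p\ne 0\}}\subset\bigcup_p\mathrm{supp}\,\psi_p\subset\bigcup_p W_p\subset U$, where the first inclusion uses local finiteness so that the closure distributes over the union. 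I do not expect any genuine obstacle here; the only mildly delicate points are recalling that smoothness on the closed set $A$ is \emph{defined} via local smooth extensions, which supplies the $f_p$, and invoking paracompactness of smooth manifolds with boundary to produce the partition of unity on the ambient $M$.
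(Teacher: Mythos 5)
Your argument is correct and is exactly the standard partition-of-unity proof; the paper itself does not prove this lemma but cites it as classical (Lemma 2.26 of Lee's \emph{Introduction to Smooth Manifolds}), and your write-up reproduces that textbook argument, including the key points that smoothness on the closed set $A$ is defined via local smooth extensions and that local finiteness lets the closure pass through the union in the support estimate. Nothing further is needed.
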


\begin{proof}[Proof of Theorem \ref{main}]
	Calculate that
	\[\begin{split}
	\Hessian_{\delta e^\phi}(L, N)=&g(\nabla_L\nabla(\delta e^\phi), N)\\=&
	g(\nabla_L(\delta \nabla e^\phi), N)+g(\nabla_L(e^\phi\nabla\delta), N)\\=&
	\delta g(\nabla_L(\nabla e^\phi), N)+e^\phi L(\delta)\overline{N}(\phi)+e^\phi g(\nabla_L(\nabla\delta), N)+e^\phi L(\phi)\overline{N}(\delta)
	\end{split}\]
	
	On $\partial\Omega$,
	\[
	\Hessian_{\delta e^\phi}(L, N)=e^\phi g(\nabla_L(\nabla\delta), N)+e^\phi L(\phi)\overline{N}(\delta)=e^\phi(\Hessian_\delta(L, N)+L(\phi)\overline{N}(\delta)).\]

	We let $\phi$ be the solution of \[\overline{L}(\phi)=-\frac{\overline{\Hessian_\delta(L, N)}}{N(\delta)},\] from Proposition \ref{cprop} defined on a closed neighborhood $\overline{V}$ of $\Gamma$ in $\partial\Omega$. Then one finds on $\Gamma$ that 
	\[\Hessian_{\delta e^\phi}(L, N)=0.\]
	
	By the extension lemma for smooth functions, we can find $\tilde{\phi}$ defined on a neighborhood $U$ of $\Gamma$ in $\mathbb{C}^2$ such that $\tilde{\phi}\vert_{\overline{V}}=\phi$ . We find a smooth function $\chi$ defined on a neighborhood of $\Omega$ and $\overline{\support\tilde{\phi}}\subset W$ where $W$ is an open subset of $\mathbb{C}^2$ such that $\Gamma\subset\overline{W}\subset U$. Now $\chi$ also satisfies
	\[\chi(z,w)=\begin{cases}
	1, \quad \text{if}(z,w)\in W\\
	0, \quad \text{if}(z,w)\not\in U.
	\end{cases}\]
	Then we define $\Phi$ to be
		\[\Phi(z,w)=\begin{cases}
		\chi\tilde{\phi}, \quad \text{if}(z,w)\in U\\
		0, \quad \text{otherwise}.
		\end{cases}\]
		Then $\delta e^\Phi$ is a defining function of $\Omega$ which satisfies 	\[\Hessian_{\delta e^\Phi}(L, N)=0\] on $\Gamma$. By Theorem \ref{prop}, the theorem is proved.
\end{proof}

Inspired by the preceding proof, the following theorem can be established. 

\begin{theorem}
	Let $\delta$ be an arbitrarily defining function of a bounded domain $\Omega\subset\mathbb{C}^2$ with smooth boundary. Let $\Sigma\subset\partial\Omega$ denote the Levi-flat sets of $\partial\Omega$.  Then \[\Hessian_\rho(L, N)=0\] on $\Sigma$ for the defining function $\rho=\delta e^\phi$ of $\Omega$ if and only if there exists a (real) smooth function $\phi$ so that \[L(\phi)=-\frac{\Hessian_\delta(L, N)}{\overline{N}(\delta)}\quad \text{ on  } \Sigma,\] where $L$ is the normalized holomorphic tangential vector field of $\partial\Omega$ and $N$ is the normalized holomorphic normal vector field of $\partial\Omega$.  Specially, if there is no real function $\phi$ which solves \[L(\phi)=-\frac{\Hessian_\delta(L, N)}{\overline{N}(\delta)},\] then there exists no defining functions $r$ so that $\Hessian_r(L,N)=0$.
\end{theorem}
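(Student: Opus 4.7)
The plan is to reduce the theorem to a single direct Hessian computation, essentially the one already carried out in the proof of Theorem~\ref{main}. First I would establish, for any real smooth $\phi$, the identity
\[
\Hessian_{\delta e^\phi}(L,N)\big|_{\partial\Omega} = e^\phi\bigl(\Hessian_\delta(L,N) + L(\phi)\,\overline{N}(\delta)\bigr).
\]
To derive it, expand $\nabla(\delta e^\phi) = e^\phi\nabla\delta + \delta e^\phi\nabla\phi$ and apply $\nabla_L$; this yields four terms for $\Hessian_{\delta e^\phi}(L,N) = g(\nabla_L \nabla(\delta e^\phi), N)$. On $\partial\Omega$ the two terms carrying a factor of $\delta$ drop out, and the term $e^\phi L(\delta)\overline{N}(\phi)$ also vanishes because $L\in T^{1,0}\partial\Omega$ forces $L(\delta)=0$ there. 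The two surviving terms, after applying $g(\nabla\delta, N) = \overline{N}(\delta)$ from the convention laid out in Section~\ref{pre}, give the displayed formula.

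Next, since $\overline{N}(\delta) = \sqrt{|\partial_z\delta|^2 + |\partial_w\delta|^2} > 0$ on $\partial\Omega$ and $e^\phi\neq 0$, the identity yields the claimed equivalence on $\Sigma$:
\[
\Hessian_{\delta e^\phi}(L,N) = 0 \text{ on } \Sigma \quad\Longleftrightarrow\quad L(\phi) = -\frac{\Hessian_\delta(L,N)}{\overline{N}(\delta)} \text{ on } \Sigma.
\]
The forward direction is simply reading off the PDE from the vanishing of the Hessian; the backward direction notes that any real smooth $\phi$ solving the PDE produces a valid defining function $\rho = \delta e^\phi$ (since $e^\phi > 0$) whose Hessian entry $\Hessian_\rho(L,N)$ vanishes on $\Sigma$.

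For the \emph{Specially} clause I would argue by contrapositive. Suppose $r$ is any defining function of $\Omega$ with $\Hessian_r(L,N) = 0$ on $\Sigma$. Since $\delta$ and $r$ are smooth defining functions of the same domain with nonvanishing gradients on $\partial\Omega$, their ratio $r/\delta$ extends smoothly and strictly positively across $\partial\Omega$; hence $\phi := \log(r/\delta)$ is a real smooth function with $r = \delta e^\phi$. Applying the equivalence just proved to this $\phi$ then produces a real solution of the PDE on $\Sigma$, contradicting the assumed nonexistence.

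The main technical content is the single Hessian computation extracted from Theorem~\ref{main}; beyond that, the only mildly delicate points are the positivity of $\overline{N}(\delta)$ on $\partial\Omega$ (immediate from $|\nabla\delta|\neq 0$ there) and the smooth positive extension of $r/\delta$ across the boundary (a standard fact about defining functions). I do not expect any substantive obstacle.
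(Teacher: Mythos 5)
Your proposal is correct and follows essentially the same route as the paper: the same boundary identity $\Hessian_{\delta e^\phi}(L,N)=e^\phi(\Hessian_\delta(L,N)+L(\phi)\overline{N}(\delta))$ (with the $\delta$-terms and $L(\delta)\overline{N}(\phi)$ dropping out on $\partial\Omega$), followed by the observation that $\overline{N}(\delta)\neq 0$ there, and the same reduction of the ``Specially'' clause via writing an arbitrary defining function as $r=\delta h$ with $h>0$ smooth and setting $\phi=\log h$.
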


\begin{proof}
	The first part is true because of the equality: 	\[
	\Hessian_{\delta e^\phi}(L, N)=e^\phi(\Hessian_\delta(L, N)+L(\phi)\overline{N}(\delta)) \quad\text{ on  }\Sigma.\]
	The second part holds because it is well known that every defining function $\rho$ can be written as $\rho=\delta\cdot h$ for some smooth $h>0$. We define $\phi=\log h$ and then the second part follows from the first part.
\end{proof}

From the preceding theorem, combining with Theorem \ref{prop} we have the following result.

\begin{theorem}
	Let $\delta$ be an arbitrarily defining function of a bounded domain $\Omega\subset\mathbb{C}^2$ with smooth boundary. Let $\Sigma\subset\partial\Omega$ denote the Levi-flat sets of $\partial\Omega$. Suppose there is a real function $u$ which solves \[L(u)=-\frac{\Hessian_\delta(L, N)}{\|\nabla\delta\|}\] on $\Sigma$. Then the Diederich-Forn\ae ss index of $\Omega$ is $1$.
\end{theorem}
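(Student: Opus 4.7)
The plan is to obtain this corollary as a direct consequence of the preceding ``if and only if'' theorem combined with Theorem~\ref{prop}, after reconciling the denominators $\|\nabla\delta\|$ and $\overline{N}(\delta)$.

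First I would do the bookkeeping between the two normalizations. Using the definition of $N$ from Section~\ref{sec3} together with the expression $\nabla f = 2\bigl(\partial_z f\,\partial_{\bar z}+\partial_{\bar z} f\,\partial_z+\partial_w f\,\partial_{\bar w}+\partial_{\bar w} f\,\partial_w\bigr)$ from Section~\ref{pre}, one computes on $\partial\Omega$
\[
\overline{N}(\delta)=\sqrt{|\delta_z|^2+|\delta_w|^2},\qquad \|\nabla\delta\|=2\sqrt{|\delta_z|^2+|\delta_w|^2},
\]
so that $\|\nabla\delta\|=2\,\overline{N}(\delta)$. Consequently, if the real function $u$ satisfies $L(u)=-\Hessian_\delta(L,N)/\|\nabla\delta\|$ on $\Sigma$, then $\phi:=2u$ is still real and satisfies
\[
L(\phi)=-\frac{\Hessian_\delta(L,N)}{\overline{N}(\delta)}\quad\text{on }\Sigma,
\]
which is exactly the hypothesis appearing in the preceding theorem.

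Next I would promote $\phi$, which a priori is only useful on $\Sigma$, to a smooth function on a full neighborhood of $\overline\Omega$, so that $\delta e^\phi$ is a legitimate defining function. This is done exactly as in the proof of Theorem~\ref{main}: apply the Extension Lemma for smooth functions to obtain $\tilde\phi$ defined on a neighborhood $U$ of $\Sigma$ in $\mathbb{C}^2$, then multiply by a cutoff $\chi$ with $\chi\equiv 1$ near $\Sigma$ and $\support\chi\subset U$, and set $\Phi=\chi\tilde\phi$ (extended by $0$ elsewhere). The resulting $\rho:=\delta e^\Phi$ is a smooth global defining function of $\Omega$, and on $\Sigma$ (where $\Phi=\phi$) the preceding theorem gives $\Hessian_\rho(L,N)=0$. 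An application of Theorem~\ref{prop} then yields that the Diederich--Forn\ae ss index of $\Omega$ equals $1$.

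The only place where any care is needed is the factor-of-$2$ identification $\|\nabla\delta\|=2\,\overline{N}(\delta)$, which is forced by the convention used in Section~\ref{pre} that the real gradient carries a factor $2$ when expressed in complex coordinates. Once this is verified, the argument consists entirely of quoting the two earlier results in sequence, with no further analytic input.
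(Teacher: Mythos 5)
Your proof is correct and follows essentially the same route as the paper, which obtains this theorem precisely by combining the preceding equivalence theorem with Theorem \ref{prop}, using the extension-lemma/cutoff construction from the proof of Theorem \ref{main} to turn $\phi$ into a global defining function $\delta e^{\Phi}$. Your factor-of-two reconciliation $\|\nabla\delta\|=2\,\overline{N}(\delta)$, handled by passing to $\phi=2u$, is consistent with the paper's own identification $g(\nabla r, N+\overline{N})=\|\nabla r\|$ in the proof of Theorem \ref{prop}, so it is exactly the bookkeeping the paper leaves implicit.
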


\section{Infinite Type and Diederich-Forn\ae ss Index}\label{sec5}

In this section, we will answer Question 3 raised in Section \ref{1}. We want to see a new example which has the Diederich--Forn\ae ss index 1 but cannot be verified by formerly known theorems. Our example will neither be of finite type nor admit a plurisubharmonic defining function. Thus, to show the Diederich--Forn\ae ss index to be 1, we have to use our theorems in the current article.

It has been known to the experts that there is no equivalence between finite type and trivial Diederich-Forn\ae ss index.  Nevertheless, one can show that the domain of finite type has Diederich-Forn\ae ss index 1. (Indeed, we could not find a precise reference. Professor Anne-Katrin Gallagher was kind to teach us the following arguments, so the authors owe her the credit.) This is done similar to the usual construction: $re^{-\phi_M}$ where $r$ is some defining function for the domain, $\{\phi_M\}$ are the functions constructed by Catlin in \citep{Ca87}. That is $0\leq\phi_M\leq 1$ and the complex Hessian of $\phi_M$ in a direction $\xi$ is larger than $M|\xi|^2$.

Let us consider a domain of which the Levi-flat points form a real curve transversal to the holomorphic tangent vector fields on the boundary. The reader should be warned that the following domain also admits a defining function which is plurisubharmonic on the boundary.

\begin{example}
	Let \[\Omega=\lbrace (z,w)\in\mathbb{C}^2: |z|^2+2e^{-1/|w|^2}<1\rbrace\] be a bounded domain with smooth boundary in $\mathbb{C}^2$. Moreover, it has an infinite type point at $(1 ,0)$.
\end{example}

We are going to verify the Levi-flat points are only at $(e^{i\theta} ,0)$ for $\theta\in[0.2\pi)$. 
Since \[\rho(z,w)=|z|^2+2e^{-1/|w|^2}-1,\] we obtained that \[\frac{\partial\rho}{\partial w}=2\frac{e^{-1/|w|^2}}{w^2\overline{w}},\]
\[\frac{\partial\rho}{\partial z}=\overline{z},\]
\[\frac{\partial^2\rho}{\partial z\partial{\bar{z}}}=1\]
and  \[\frac{\partial^2\rho}{\partial w\partial{\bar{w}}}=2e^{-1/|w|^2}(\frac{1}{|w|^6}-\frac{1}{|w|^4}).\]

Moreover, the holomorphic tangent vector field is \[L=2\frac{e^{-1/|w|^2}}{w^2\overline{w}}\frac{\partial}{\partial z}-\overline{z}\frac{\partial}{\partial w}\]

and its complex Hessian is 
\[\begin{pmatrix}
1&0\\0&2e^{-1/|w|^2}(\frac{1}{|w|^6}-\frac{1}{|w|^4})
\end{pmatrix}.\]

Thus the Levi form is 
\[\begin{split}
&\begin{pmatrix}
2\frac{e^{-1/|w|^2}}{w^2\overline{w}}&-\overline{z}
\end{pmatrix}\begin{pmatrix}
1&0\\0&2e^{-1/|w|^2}(\frac{1}{|w|^6}-\frac{1}{|w|^4})
\end{pmatrix}\begin{pmatrix}
2\frac{e^{-1/|w|^2}}{w\overline{w}^2}\\-z
\end{pmatrix}\\=&4\frac{e^{-2/|w|^2}}{|w|^6}+2|z|^2e^{-1/|w|^2}\left(\frac{1}{|w|^6}-\frac{1}{|w|^4}\right)\\=&2e^{-1/|w|^2}\left(2\frac{e^{-1/|w|^2}}{|w|^6}+|z|^2\left(\frac{1}{|w|^6}-\frac{1}{|w|^4}\right)\right).
\end{split}\]

We can see that \[\lbrace(z,w)\in\mathbb{C}^2: |z|=1, w=0\rbrace\] is a set of Levi-flat points. To see if all Levi-flat points belong to it, we need to solve the algebraic equation:
\[
	\begin{cases}
	2e^{-1/|w|^2}\left(2\frac{e^{-1/|w|^2}}{|w|^6}+|z|^2\left(\frac{1}{|w|^6}-\frac{1}{|w|^4}\right)\right)=0\\
	|z|^2+2e^{-1/|w|^2}=1.
	\end{cases}
\]
In case $w\neq 0$, the previous equation is equivalent to the following:
\[\begin{split}
0=&2\frac{e^{-1/|w|^2}}{|w|^6}+\left(1-2e^{-1/|w|^2}\right)\left(\frac{1}{|w|^6}-\frac{1}{|w|^4}\right)\\=&\frac{1}{|w|^6}-\frac{1}{|w|^4}+2\frac{e^{-1/|w|^2}}{|w|^4}\\=&\frac{1}{|w|^4}\left(\frac{1}{|w|^2}-1+2e^{-1/|w|^2}\right).
\end{split}\]

To solve \[0=\frac{1}{|w|^2}-1+2e^{-1/|w|^2},\] we let $t=-\dfrac{1}{|w|^2}<0$ and it converts to 
\[0=-t-1+2e^t\] which asserts that $t<0$ has no solution. Thus the complete Levi-flat set is \[\Gamma:=\lbrace(z,w)\in\mathbb{C}^2: |z|=1, w=0\rbrace.\]

The tangent vector on $\Gamma$ is \[x\frac{\partial}{\partial x}+y\frac{\partial}{\partial y}+0\frac{\partial}{\partial u}+0\frac{\partial}{\partial y},\] where $z=x+iy$ and $w=u+iv$. At $\Gamma$, \[L=-\bar{z}\frac{\partial}{\partial w}.\]

Hence the Diederich-Forn\ae ss index of $\Omega$ is 1, because of $L$ is transversal to $\Gamma$. The same computation works for a slightly more general domain as what follows.

\begin{proposition}
	Assume $z_0\in\mathbb{C}$ and $v_0\in\mathbb{R}$. Let $p=(z_0, v_0)$ and \[\Omega_{a,b}(p):=\lbrace(z,w)\in\mathbb{C}^2: a|z-z_0|^2+2e^{-1/|w-v_0|^2}<b\rbrace\] for arbitrary $a>0$ and $b\in(0,2)$. Then the Levi-flat sets $\mathcal{F}_{a,b}(p)$ is \[\lbrace(z, w)\in\mathbb{C}^2: |z-z_0|=\sqrt{\frac{b}{a}}, w=v_0\rbrace, \] is a real curve and is transversal to holomorphic tangent vector fields. Hence, the Diederich-Forn\ae ss index of $\Omega_{a, b}(p)$ is 1.
\end{proposition}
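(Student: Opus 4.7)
The plan is to reduce this to a translated and rescaled version of the computation already carried out for the unit case, and then invoke Theorem \ref{main}. Write $z' = z - z_0$, $w' = w - v_0$, so that the defining function becomes $\rho(z,w) = a|z'|^2 + 2e^{-1/|w'|^2} - b$. Translation by $(z_0,v_0)$ is a biholomorphism, so it preserves the Levi-flat set, the transversality condition, and the Diederich--Forn\ae ss index; hence I may work in the translated coordinates.

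First I would compute the first and second derivatives of $\rho$ in exactly the same way as in the displayed example, the only change being the factor $a$ in front of $|z'|^2$. This yields the (unnormalized) holomorphic tangent vector
\[
L = 2\frac{e^{-1/|w'|^2}}{(w')^2\overline{w'}}\frac{\partial}{\partial z} - a\overline{z'}\frac{\partial}{\partial w},
\]
and after one line of algebra the Levi form along $L$ equals
\[
2e^{-1/|w'|^2}\left(2\frac{e^{-1/|w'|^2}}{|w'|^6} + a|z'|^2\Big(\frac{1}{|w'|^6}-\frac{1}{|w'|^4}\Big)\right).
\]
Next I would determine $\mathcal F_{a,b}(p)$ by solving the system given by the vanishing of this expression together with $\rho=0$. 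The points with $w'=0$ automatically make the Levi form vanish, and combined with $\rho=0$ they give precisely the circle $|z'|^2 = b/a$, which is $\mathcal F_{a,b}(p)$ as claimed. To rule out additional Levi-flat points, I substitute $a|z'|^2 = b - 2e^{-1/|w'|^2}$ from $\rho = 0$ into the Levi form. Exactly as in the worked example (with the harmless rescaling by $a$ cancelling), this reduces to solving $\tfrac{1}{|w'|^2} - 1 + 2e^{-1/|w'|^2} = 0$; setting $t = -1/|w'|^2 < 0$ turns this into $-t - 1 + 2e^t = 0$, whose only real root is $t=0$, which is not attained for finite $w'$. So no point with $w' \neq 0$ lies in $\mathcal F_{a,b}(p)$.

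Finally I verify the geometric hypotheses of Theorem \ref{main}. The set $\mathcal F_{a,b}(p)$ is the circle $\{(z_0 + \sqrt{b/a}\,e^{i\theta}, v_0) : \theta \in [0,2\pi)\}$, which is clearly a smooth simple real curve, so its tangent vector lies in the real span of $\partial/\partial x, \partial/\partial y$ (with $z=x+iy$). On the other hand, at every point of $\mathcal F_{a,b}(p)$ one has $w' = 0$, so $\partial\rho/\partial w$ vanishes there and
\[
L\big|_{\mathcal F_{a,b}(p)} = -a\overline{z'}\,\frac{\partial}{\partial w},
\]
which is a nonzero holomorphic vector purely in the $w$-direction, while $\bar z'$ never vanishes on the circle. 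Hence $\Re L$ and $\Im L$ together with the tangent of $\mathcal F_{a,b}(p)$ are linearly independent, which is precisely the transversality hypothesis of Theorem \ref{main}. Applying that theorem concludes that the Diederich--Forn\ae ss index of $\Omega_{a,b}(p)$ is $1$.

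The only mildly delicate step is the algebraic verification that no Levi-flat points exist off the circle $\{w' = 0\}$; everything else is a straightforward translation-and-scaling of the preceding example combined with a direct appeal to Theorem \ref{main}.
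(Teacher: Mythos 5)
Your overall route --- translate to $(z',w')=(z-z_0,w-v_0)$, redo the computation of the worked example with the parameters $a,b$, identify the Levi-flat set, check transversality of the circle to $L$, and invoke Theorem \ref{main} --- is exactly the paper's argument; the paper's own ``proof'' of this proposition is the single remark that the same computation as in the example works, so your approach is not a different one.

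Two details in your middle step are stated incorrectly, though both are easily repaired. First, after substituting $a|z'|^2=b-2e^{-1/|w'|^2}$ the factor $a$ cancels, but the constant $b$ does not: for $w'\neq0$ the vanishing of the Levi form on the boundary becomes
\[
0=\frac{1}{|w'|^4}\Big(\frac{b}{|w'|^2}-b+2e^{-1/|w'|^2}\Big),
\]
so with $t=-1/|w'|^2<0$ you must exclude negative roots of $2e^{t}=b(t+1)$, not of $-t-1+2e^{t}=0$; the latter is the special case $b=1$. Second, your assertion that $t=0$ is ``the only real root'' of $-t-1+2e^{t}=0$ is false: $2e^{t}-t-1\geq\ln 2>0$ for all real $t$, so that equation has no real roots at all (which is how the paper phrases it, ``$t<0$ has no solution''). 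The correct fix is also where the hypothesis $b\in(0,2)$ genuinely enters: for $-1<t<0$ one has $2e^{t}>2(1+t)>b(1+t)$ (using $e^{t}>1+t$ and $b<2$), and for $t\leq-1$ one has $2e^{t}>0\geq b(1+t)$; hence $2e^{t}=b(t+1)$ has no root with $t<0$, and the Levi-flat set is exactly the circle $\lbrace|z-z_0|=\sqrt{b/a},\ w=v_0\rbrace$. (Also, the Levi form you display is missing an overall factor of $a$, harmless since $a>0$.) With these corrections your transversality check and the appeal to Theorem \ref{main} go through and the proof is complete.
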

\begin{remark}
	Let $z=x+iy$ and $w=u+iv$ and think $v$-axis as the vertical direction. By calculation, we find out the north pole $\mathcal{N}$ of $\Omega_{a,b}(p)$ is at $\left(0, v_0+\sqrt{\frac{1}{\ln2-\ln b}}\right)$ and $\mathcal{F}_{a,b}(p)$ is the equator. We also define $\Omega^+_{a,b}(p)$ to be $\mathbb{B}(\mathcal{N}, \epsilon)\cap\Omega_{a,b}(p)$, where $\epsilon>0$ is chosen to be very small so that $\mathcal{F}_{a,b}(p)\cap \mathbb{B}(\mathcal{N},2\epsilon)=\varnothing$. We denote the complement of $\Omega^+_{a,b}(p)$ by $\Omega^-_{a,b}(p)$.
\end{remark}

The previous example admits a defining function plurisubharmonic on the boundary. Of course, by Forn\ae ss--Herbig's theorem, the Diederich--Forn\ae ss index is 1. For the following paragraphs, we are going to construct a bounded pseudoconvex, infinite type domain $\tilde{\Omega}$ with smooth boundary which does not admit a defining function which is plurisubharmonic on the boundary. Our example is motivated by the method McNeal uses to prove Proposition 2.1 of \citep{Mc92}. More specifically, we will solder one piece of the domain in \citep{Be85} with our domain $\Omega_{a,b}$ along the strongly pseudoconvex boundary points. The result domain should be infinite type because of $\Omega_{a,b}$ and does not admit a defining function which is plurisubharmonic on the boundary because of Behrens' result. Moreover, the domain has Diederich-Forn\ae ss index 1 by Remark \ref{rmk} because the Levi-flat points are a point united with a real curve transversal to holomorphic tangent vector fields.

Firstly, recall the result of \citep{Be85}.
\begin{theorem}[Behrens]
	There is a $\tau_0>0$ such that $H\cap \mathbb{B}(0, \tau_0)$ is pseudoconvex from the side $\rho<0$ and the origin is the only non-strongly pseudoconvex point, where $H$ is a smooth hypersurface defined by the function
	\[\rho_H(z, w)=v+R(z, w),\]
	where \[\begin{split}
	P_6(z)&=\frac{1}{2}|z|^6+2\Re\left(-\frac{1}{20}\bar{z}^5z+\frac{i}{4}\bar{z}^4z^2\right)\\
	Q_4(z)&=\frac{1}{2}|z|^4-\frac{i}{6}z^3\bar{z}+\frac{i}{6}\bar{z}^3z\\
	R(z, w)&=P_6(z)+2uQ_4(z)+|z|^2u^2+|z|^2u^4+|z|^{10}+|z|^6u^2.
	\end{split}\]
\end{theorem}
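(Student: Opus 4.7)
The plan is to parametrize $H$ explicitly as a graph and reduce the pseudoconvexity statement to the non-negativity of a single scalar Levi form as a function on the $(z,u)$-hyperplane. Since $\rho_H = v + R(z,u)$ is affine in $v$, we have $\partial \rho_H/\partial v \equiv 1$, so the implicit function theorem gives a global graphical parametrization $v = -R(z,u)$ of $H$ near the origin, and there is a one-dimensional holomorphic tangent distribution spanned by
\[
L = \frac{\partial \rho_H}{\partial w}\frac{\partial}{\partial z} - \frac{\partial \rho_H}{\partial z}\frac{\partial}{\partial w}.
\]
Pseudoconvexity from the side $\rho_H<0$ is then equivalent to the scalar Levi form $\lambda(z,u) := \Hessian_{\rho_H}(L,\bar L)$ being $\geq 0$ on $H\cap \mathbb{B}(0,\tau_0)$, and the second assertion is that $\lambda(z,u) = 0$ precisely at the origin. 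Both reduce to a computation purely in $(z,\bar z,u)$.

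The first step is to compute $\lambda(z,u)$ as a real polynomial. Writing $f_w = \tfrac{1}{2i} + \tfrac{1}{2}R_u$ and $f_z = R_z$ and expanding
\[
\lambda = |f_w|^2 R_{z\bar z} - 2\Re\bigl(f_w \overline{f_z} R_{z\bar w}\bigr) + |f_z|^2 R_{w\bar w},
\]
with $R_{z\bar w} = \tfrac{1}{2}R_{zu}$ and $R_{w\bar w} = \tfrac{1}{4}R_{uu}$, the problem becomes the identification of the leading weighted-homogeneous part of $\lambda$. Assigning the Behrens weights $\mathrm{wt}(z)=\mathrm{wt}(\bar z)=1$, $\mathrm{wt}(u)=2$, the dominant pieces of $R$ carry weight $6$ (namely $P_6$, $2uQ_4$ and $|z|^2 u^2$), so $\lambda$ has a leading weighted part $\lambda_4(z,\bar z,u)$ of weight $4$, while the remaining terms $|z|^2 u^4$, $|z|^{10}$ and $|z|^6 u^2$ contribute only higher-weight, strictly non-negative remainders that are dominated on $\mathbb{B}(0,\tau_0)$ for $\tau_0$ small.

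The heart of the proof is therefore the identity that the specific coefficients chosen in $P_6$ and $Q_4$ make $\lambda_4(z,\bar z, u)$ a sum of squares in $(z,\bar z, u)$. Concretely, one regroups the contributions of $P_6$, $2uQ_4$ and $|z|^2u^2$ to write
\[
\lambda_4(z,\bar z,u) = \bigl|A(z,\bar z) + B(z,\bar z)\, u\bigr|^2 + C(z,\bar z)\, u^2,
\]
for explicit $A, B, C$ with $C\ge 0$, so that $\lambda_4 \ge 0$ and $\lambda_4$ vanishes only when $A(z,\bar z) = 0$, $C(z,\bar z)u^2 = 0$; the algebra of $P_6$ is arranged exactly so that $A$ vanishes only at $z=0$, at which point $C(0) = 0$ also, forcing $(z,u)=0$. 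The purely positive pieces $|z|^{10}$ and $|z|^6 u^2$ then ensure that along any sequence of points with $(z,u) \to 0$ the higher-weight error in $\lambda - \lambda_4$ is absorbed. Combining gives $\lambda > 0$ on $\{0<|(z,u)|<\tau_0\}$ and $\lambda(0)=0$, which is precisely the two assertions.

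The main obstacle is the bookkeeping of the sum-of-squares decomposition: verifying that the cross-terms between $P_6''$, $(uQ_4)''$, and $(|z|^2 u^2)''$ cancel correctly, and that no rotational direction in $z$ degenerates $A$ away from $z=0$. Once this algebraic identity is in hand, the rest is a soft dominance argument on a ball of sufficiently small radius $\tau_0$, controlling $\lambda - \lambda_4$ by the strictly positive $|z|^{10}$ and $|z|^6 u^2$ contributions and by the smallness of $u$ versus $|z|^2$ in the weighted ball.
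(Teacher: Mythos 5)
First, note that the paper does not prove this statement at all: it is quoted verbatim from Behrens \citep{Be85}, so there is no internal proof to compare against, and your task is genuinely to reprove Behrens' construction. Your outline (graph the hypersurface, use $L=\rho_w\partial_z-\rho_z\partial_w$, reduce to the scalar Levi form $\lambda=|\rho_w|^2\rho_{z\bar z}-2\Re(\rho_{z\bar w}\rho_{\bar z}\rho_w)+|\rho_z|^2\rho_{w\bar w}$, and organize by the weights $\mathrm{wt}(z)=1$, $\mathrm{wt}(u)=2$) is the right framework. But the heart of your argument is asserted rather than carried out, and in fact the structural claim you make about the leading part is false. A direct computation gives $(Q_4)_{z\bar z}=2|z|^2+\Im(z^2)$ and $(P_6)_{z\bar z}=\tfrac92|z|^4-\tfrac12\Re(z^4)+4|z|^2\Im(z^2)$, and one checks that $(P_6)_{z\bar z}-\bigl((Q_4)_{z\bar z}\bigr)^2=0$ identically. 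Hence the weight-$4$ part of $\lambda$ is exactly
\[
\lambda_4=\tfrac14\bigl(u+2|z|^2+\Im(z^2)\bigr)^2,
\]
a perfect square whose zero set is the entire $2$-dimensional surface $\{u=-2|z|^2-\Im(z^2)\}$ through the origin, not just the origin. So your conclusion that ``$\lambda_4$ vanishes only when $(z,u)=0$'' does not hold, and the second assertion of the theorem (origin is the \emph{only} non--strongly pseudoconvex point) cannot be read off from $\lambda_4$ alone.

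This defeats the ``soft dominance'' step as you stated it. Along the degenerate surface the positive contributions left in $\rho_{z\bar z}$ beyond the square, namely $u^4+25|z|^8+9|z|^4u^2$ coming from $|z|^2u^4+|z|^{10}+|z|^6u^2$, have weight $8$, while the cross term $-2\Re(\rho_{z\bar w}\rho_{\bar z}\rho_w)$ (with $\rho_{z\bar w}=\tfrac12R_{zu}$ of weight $3$, $\rho_{\bar z}$ of weight $5$, $\rho_w\approx-\tfrac i2$) is \emph{also} of weight $8$. So on that surface you face a genuine weight-$8$ versus weight-$8$ competition, which no choice of small $\tau_0$ resolves by itself; the required inequality is a delicate coefficient-level estimate (this is precisely the content of Behrens' paper, where the higher-order terms are engineered to beat the cross term on the degeneracy locus). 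To complete your proof you would have to restrict $\lambda$ to a neighborhood of $\{u+2|z|^2+\Im(z^2)=0\}$, expand the cross term explicitly there, and show it is dominated by $\tfrac14\bigl(u^4+25|z|^8+9|z|^4u^2\bigr)$ plus the square term, with strict positivity off the origin. As written, the proposal names the decomposition it needs but neither verifies it nor has the correct shape for it, so the key step is missing.
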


Recall that $p=(z_0, v_0)$, and we define \[T(p,(z, w))=a|z-z_0|^2+2e^{-1/|w-v_0|^2}\] for $a>0$.

Let $a>0$ big, $0<\delta<1$ small, $p$ in the side of $\rho_H<0$ so that $\partial\Omega_{a,1}(p)$ intersects $H$ transversally, $H\cap \partial\Omega_{a,1}(p)\subset \mathbb{B}(0,\tau_0)$ and $\Omega^-_{a,b}(p)\subset \lbrace \rho_H<-\delta \rbrace $. By continuity argument, we can also assume there exists $\epsilon_0>0$ so that $H\cap \partial\Omega_{a,1+\epsilon_0}(p)\subset \mathbb{B}(0,\tau_0)$ and $H$ intersects $\partial\Omega_{a, b}(p)$ transversally for any $b\in(1-\epsilon_0,1+\epsilon_0)$.

We define for a fixed $\eta\in (0,1)$
\[\rho=K\chi_1(T(p,(z,w)))+\chi_2(-(-\rho_H(z, w))^\eta),\]
where $K>2$ is to be determined later. Note it here that $-(-\rho_H(z, w))^\eta$ is plurisubharmonic for any $\eta>0$ in $\rho_H<0$ away from origin. This is because origin is the only non-strongly pseudoconvex point. Here we let $\chi_1(t)$ be a real-valued, $C^\infty$ increasing funcion on $\mathbb{R}$ with $\chi_1(t)\equiv 0$ if $t<1-\epsilon_0$ for an $0<\epsilon_0<\frac{1}{2}$ and $\chi_1''(t)>0$ if $t>1-\epsilon_0$ and there is $t_0\in (1-\epsilon_0, 1+\epsilon_0)$ so that $\chi_1(t_0)=t_0$. We also let $\chi_2(t): \mathbb{R}\rightarrow\mathbb{R}$ be a $C^\infty$ increasing function such that $\chi_2(t)\equiv-\delta^\eta$ if $t\leq-(\delta^\eta)$ and $\chi_2(t)=t$ if $t>-\frac{1}{2}\delta^\eta$.

Let \[\tilde{\Omega}:=\lbrace(z, w)\in\mathbb{C}^2: \rho(z,w)<0\rbrace,\] where $K$ is big enough to ensure $\tilde{\Omega}$ being bounded.
We divide $\partial\tilde{\Omega}$ into three sets:
\[\begin{split}
B_1&=\lbrace(z, w)\in\partial\tilde{\Omega}: T(p, (z, w))<1-\epsilon_0 \rbrace\\
B_2&=\lbrace(z, w)\in\partial\tilde{\Omega}: \rho_H(z,w)<-\delta \rbrace\\
B_3&=\lbrace(z, w)\in\partial\tilde{\Omega}: T(p, (z,w))\geq 1-\epsilon_0 \quad\text{and}\quad \rho_H(z,w)>-\delta \rbrace
\end{split}\]

For $B_1$, $\chi_1=0$ so the $\partial\tilde{\Omega}$ is defined by $\rho_H$. For $B_2$, observe that if $T(p, (z,w))=1-\epsilon_0$, \[\rho=-\delta^\eta<0.\] When $(z,w)\in\mathbb{C}^2$ is such that $T(p,(z,w))=t_0\in(1-\epsilon_0,1+\epsilon_0)$,
\[\rho=KT(p, (z,w))-\delta^\eta>2-2\epsilon_0-\delta^\eta>0,\] by shrinking $\delta>0$. Hence $B_2$ is defined by
\[a|z-z_0|^2+2e^{-1/|w-w_0|^2}=\chi^{-1}_1(\frac{\delta^\eta}{K})<2.\] Moreover Levi-flat sets of $B_2$ is a real curve transversal to holomorphic tangent vector fields. For $B_3$, $\rho$ is a plurisubharmonic function, because $T(p, (z,w))$ and $-(-\rho_H)^\eta$ are both plurisubharmonic, for any $\eta$. So $\tilde{\Omega}$ is strongly pseudoconvex on $B_3$. Since 
\[\nabla\rho=K\chi_1'\nabla T(p, (z, w))+\chi_2'(\eta(-\rho_H(z,w))^{\eta-1})\nabla\rho_H,\] and $H$ intersects $\partial\Omega_{a,b}(p)$ transversally for any $b\in(1-\epsilon_0,1+\epsilon_0)$, $\nabla\rho$ is not vanishing.

Hence, we obtain the main theorem in the current section.

\begin{theorem}\label{example}
	$\tilde{\Omega}$ is a bounded pseudoconvex domain in $\mathbb{C}^2$ with smooth boundary which satisfies the following two properties
	\begin{enumerate}
      \item $\tilde{\Omega}$ is neither a domain of finite type nor a domain admitting a defining function which is plurisubharmonic on the boundary. 
      \item The Diederich-Forn\ae ss index of $\tilde{\Omega}$ is 1.
	\end{enumerate}
	 
\end{theorem}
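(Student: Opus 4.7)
The plan is to verify the listed properties of $\tilde\Omega$ by decomposing its boundary into the three pieces $B_1, B_2, B_3$ already set up in the construction and handling each separately. First I would confirm that $\tilde\Omega$ is a bounded domain with smooth boundary: boundedness is forced by choosing $K$ large in $\rho = K\chi_1(T(p,\cdot)) + \chi_2(-(-\rho_H)^\eta)$, and smoothness reduces to the non-vanishing of $\nabla \rho$ on $\partial\tilde\Omega$ (on $B_1$ only the $\chi_2$-summand contributes, giving a nonzero multiple of $\nabla \rho_H$; on $B_2$ only the $\chi_1$-summand contributes, giving a nonzero multiple of $\nabla T$; on $B_3$ the two contributions are nonnegative combinations of gradients that are linearly independent thanks to the transversality of $H$ and $\partial\Omega_{a,b}(p)$ for $b\in(1-\epsilon_0,1+\epsilon_0)$).

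Next I would verify pseudoconvexity piece-by-piece. On $B_1$ the hypersurface coincides with $\lbrace \rho_H < 0\rbrace\cap \mathbb{B}(0,\tau_0)$, so pseudoconvexity is Behrens' theorem quoted above. On $B_2$ the boundary is a level set of $T(p,\cdot)$, i.e.\ the boundary of some $\Omega_{a,b'}(p)$ with $b'\in(1-\epsilon_0,1+\epsilon_0)$, whose pseudoconvexity was established in the preceding proposition. On $B_3$ the function $\rho$ is itself plurisubharmonic because $T(p,\cdot)$ is plurisubharmonic and $-(-\rho_H)^\eta$ is plurisubharmonic on $\lbrace \rho_H < 0\rbrace$ away from the origin (every boundary point of $B_3$ is strongly pseudoconvex for $\rho_H$), and $\chi_1,\chi_2$ are increasing and convex where it matters.

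For property (1), infinite type comes from any point in the Levi-flat curve of $B_2$: near such a point $\rho$ is a positive multiple of $T(p,(z,w)) - \mathrm{const}$, so the factor $e^{-1/|w-v_0|^2}$ forces the boundary to be of infinite type exactly as in the preceding example. Non-existence of a defining function plurisubharmonic on $\partial\tilde\Omega$ I would prove by contradiction: if $\tilde\rho$ were such a function, its restriction to a neighborhood of $0$ in $B_1\subset H$ would be a local defining function of Behrens' hypersurface plurisubharmonic on $H$ near $0$, contradicting her theorem.

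Finally, for property (2) I would apply Remark \ref{rmk}. The Levi-flat locus of $\partial\tilde\Omega$ splits cleanly into contributions from the three pieces: from $B_1$ only the origin (the unique non-strongly pseudoconvex point of Behrens' hypersurface) survives; $B_3$ is strongly pseudoconvex and contributes nothing; and $B_2$ contributes the real curve $\lbrace |z-z_0| = \sqrt{b'/a},\ w = v_0\rbrace$, which was shown in the preceding proposition to be transversal to the holomorphic tangent vector field. The origin and the circle are separated by the strongly pseudoconvex band $B_3$, so the Levi-flat set is the disjoint union of an isolated point and a real curve transversal to the holomorphic tangent field, and Remark \ref{rmk} yields Diederich--Forn\ae ss index $1$. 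The main obstacle I expect is the bookkeeping on $B_3$: one has to check that no new Levi-flat points are introduced by the gluing (so that the Levi-flat locus is exactly as described) and that $\nabla \rho$ really is non-vanishing there, both of which depend on fixing $K$, $\delta$, $\epsilon_0$, and the plateau values of $\chi_1,\chi_2$ consistently.
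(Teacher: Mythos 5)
Your proposal follows essentially the same route as the paper: the same decomposition of $\partial\tilde{\Omega}$ into $B_1$, $B_2$, $B_3$, with Behrens' theorem handling $B_1$ (pseudoconvexity, the isolated non-strongly-pseudoconvex point at the origin, and the non-existence of a plurisubharmonic defining function), the proposition on $\Omega_{a,b}(p)$ handling $B_2$ (infinite type and the transversal Levi-flat circle), plurisubharmonicity of $\rho$ giving strong pseudoconvexity on $B_3$, transversality of $H$ and $\partial\Omega_{a,b}(p)$ giving $\nabla\rho\neq 0$, and Remark \ref{rmk} yielding index $1$ from the Levi-flat locus being an isolated point together with a transversal real curve. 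The bookkeeping concerns you raise about $B_3$ and the choices of $K$, $\delta$, $\epsilon_0$, $\chi_1$, $\chi_2$ are exactly the points the paper itself treats briefly, so your argument is correct at the same level of detail as the paper's.
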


\section*{Appendix}
\subsection*{Proof of Lemma \ref{marco}}

Since $L(\rho)=\overline{L}(\rho)=0$, we have that,
\[	
\begin{split}
&\Hessian_{-(-re^\psi)^\eta e^{-\delta\eta |z|^2}}(aL+bN,aL+bN)\\=&-\eta e^{-\delta\eta |z|^2}(-re^\psi)^{\eta-1}\big(|a|^2\big((\delta^2\eta)(-re^\psi)  L(|z|^2)\overline{L} (|z|^2)-\delta(-re^\psi) \Hessian_{|z|^2}(L, L)-e^\psi L(\psi)\overline{L}(r)\\&-e^\psi L(r)\overline{L}(\psi)-re^\psi L(\psi)\overline{L}(\psi)-e^\psi \Hessian_r(L,L)-re^\psi \Hessian_\psi(L, L)\big)\\&+2\Re \big(a\bar{b} (\delta\eta e^\psi L(|z|^2)\overline{N} (r)+\delta\eta re^\psi L(|z|^2)\overline{N} (\psi)+(\delta^2\eta)(-re^\psi)  L(|z|^2)\overline{N} (|z|^2)-\delta(-re^\psi) \Hessian_{|z|^2}(L, N)\\&-e^\psi L(\psi)\overline{N}(r)-e^\psi L(r)\overline{N}(\psi)-re^\psi L(\psi)\overline{N}(\psi)-e^\psi \Hessian_r(L,N)-re^\psi \Hessian_\psi(L, N)) \big)\\&+|b|^2(\delta\eta N(\rho)\overline{N} (|z|^2)+\delta\eta N(|z|^2)\overline{N} (\rho)+(\delta^2\eta)(-\rho)  N(|z|^2)\overline{N} (|z|^2)-\delta(-\rho) \Hessian_{|z|^2}(N, N)\\&-\Hessian_\rho (N, N)+\frac{1}{-\rho}(\eta-1)N(\rho)\overline{N} (\rho))\big)\\=&-\eta e^{-\delta\eta |z|^2}(-re^\psi)^{\eta-1}\left(|a|^2I+2\Re (a\bar{b}I\!I)+|b|^2I\!I\!I\right).
\end{split}
\]

Recall that $\Hessian_{|z|^2}(L, L)=1$. From these, we can simplify the term $I$ a little bit in $\mathbb{C}^2$ with the following two identities, \[ e^\psi(L\psi)(\overline{L}r)+re^\psi(L\psi)(\overline{L}\psi)=(L\psi)(\overline{L}\rho)=0\quad\text{and}\] \[e^\psi(Lr)(\overline{L}\psi)+re^\psi(L\psi)(\overline{L}\psi)=(\overline{L}\psi)(L\rho)=0.\]

We have that
\[\begin{split}
I=&(\delta^2\eta)(-re^\psi)  L(|z|^2)\overline{L} (|z|^2)-\delta(-re^\psi) \Hessian_{|z|^2}(L, L)-e^\psi L(\psi)\overline{L}(r)-e^\psi L(r)\overline{L}(\psi)\\&-re^\psi L(\psi)\overline{L}(\psi)-e^\psi \Hessian_r(L,L)-re^\psi \Hessian_\psi(L, L)\\=&(\delta^2\eta)(-re^\psi)  L(|z|^2)\overline{L} (|z|^2)-\delta(-re^\psi) \Hessian_{|z|^2}(L, L)+re^\psi L(\psi)\overline{L}(\psi)\\&-e^\psi \Hessian_r(L,L)-re^\psi \Hessian_\psi(L, L)\\=&e^\psi\big((\delta^2\eta)(-r) L(|z|^2)\overline{L} (|z|^2)-\delta(-r) \Hessian_{|z|^2}(L, L)+r L(\psi)\overline{L}(\psi)-\Hessian_r(L,L)\\&-r \Hessian_\psi(L, L)\big)\\=&e^\psi\big((\delta^2\eta)(-r) L(|z|^2)\overline{L} (|z|^2)-\delta(-r)+r L(\psi)\overline{L}(\psi)-\Hessian_r(L,L)-r \Hessian_\psi(L, L)\big),
\end{split}
\]

Since $\Hessian_{|z|^2}(L, N)=0$ we can further simplify $I\!I$ a little:
\[\begin{split}
I\!I=&\delta\eta e^\psi L(|z|^2)\overline{N} (r)+\delta\eta re^\psi L(|z|^2)\overline{N} (\psi)+(\delta^2\eta)(-re^\psi)  L(|z|^2)\overline{N} (|z|^2)-\delta(-re^\psi) \Hessian_{|z|^2}(L, N)\\&-e^\psi L(\psi)\overline{N}(r)-e^\psi L(r)\overline{N}(\psi)-re^\psi L(\psi)\overline{N}(\psi)-e^\psi \Hessian_r(L,N)-re^\psi \Hessian_\psi(L, N)\\=&e^\psi\big(\delta\eta L(|z|^2)\overline{N} (r)+\delta\eta r L(|z|^2)\overline{N} (\psi)+(\delta^2\eta)(-r)  L(|z|^2)\overline{N} (|z|^2)-\delta(-r) \Hessian_{|z|^2}(L, N)\\&- L(\psi)\overline{N}(r)- L(r)\overline{N}(\psi)-r L(\psi)\overline{N}(\psi)- \Hessian_r(L,N)-r \Hessian_\psi(L, N)\big)\\=&e^\psi\big(\delta\eta L(|z|^2)\overline{N} (r)+\delta\eta r L(|z|^2)\overline{N} (\psi)+(\delta^2\eta)(-r)  L(|z|^2)\overline{N} (|z|^2)- L(\psi)\overline{N}(r)- L(r)\overline{N}(\psi)\\&-r L(\psi)\overline{N}(\psi)- \Hessian_r(L,N)-r \Hessian_\psi(L, N)\big).
\end{split}\]

More specifically, on $\partial\Omega$, we have an estimate.
\[|I\!I|<e^\psi\big(\delta\eta |L(|z|^2)\overline{N} (r)|+|L(\psi)\overline{N}(r)|+|\Hessian_r(L, N)|\big).\]

Now, for $I\!I\!I$,	
\[\begin{split}
I\!I\!I=&\delta\eta N(\rho)\overline{N} (|z|^2)+\delta\eta N(|z|^2)\overline{N} (\rho)+(\delta^2\eta)(-\rho)  N(|z|^2)\overline{N} (|z|^2)-\delta(-\rho) \Hessian_{|z|^2}(N, N)\\&-\Hessian_\rho (N, N)+\frac{1}{-\rho}(\eta-1)N(\rho)\overline{N} (\rho)\\=&\delta\eta N(\rho)\overline{N} (|z|^2)+\delta\eta N(|z|^2)\overline{N} (\rho)+(\delta^2\eta)(-\rho)  N(|z|^2)\overline{N} (|z|^2)-\delta(-\rho) \Hessian_{|z|^2}(N, N)\\&-\Hessian_\rho (N, N)+\frac{1}{-re^\psi}(\eta-1)(e^{2\psi}|N(r)|^2+e^{2\psi}r^2|N(\psi)|^2+re^{2\psi} N(r)\overline{N}(\psi)+re^{2\psi} N(\psi)\overline{N}(r))\\=&\delta\eta N(\rho)\overline{N} (|z|^2)+\delta\eta N(|z|^2)\overline{N} (\rho)+(\delta^2\eta)(-\rho)  N(|z|^2)\overline{N} (|z|^2)-\delta(-\rho)-\Hessian_\rho (N, N)\\&+\frac{1}{-re^\psi}(\eta-1)(e^{2\psi}|N(r)|^2+e^{2\psi}r^2|N(\psi)|^2+re^{2\psi} N(r)\overline{N}(\psi)+re^{2\psi} N(\psi)\overline{N}(r))
\end{split}
\]

By the previous equality, after shrinking the neighborhood of $\partial\Omega$ in $\overline{\Omega}$ so that $r$ is sufficiently small, we can also obtain an estimate for $I\!I\!I$:
\[I\!I\!I<\frac{e^\psi}{-2r}(\eta-1)|N(r)|^2.\]

\subsection*{Proof of Lemma \ref{3.1}}
We rewrite the proof with a language of differential geometry. This proof is essentially due to Forn\ae ss--Herbig in \citep{FH07}. Let $\xi=\Hessian_r(N_r, L_r)$ and then $\psi=-C\xi\bar{\xi}$.
We observe that \[L_r(\psi)=-CL_r(\xi\bar{\xi})=-C\xi L_r(\bar{\xi})-C\bar{\xi}L_r(\xi).\]
The preceding equation is $0$ because \[\xi=\Hessian_r(N_r, L_r)=0=\Hessian_r(L_r, N_r)=\bar{\xi}\] on $\Sigma$.

Now we are going to prove (\ref{eqnl}). Observe that on $\partial\Omega$, $L=L_r$ and
\[\begin{split}
\Hessian_{\xi\bar{\xi}}(L_r,L_r)&=g(\nabla_{L_r}\nabla(\xi\bar{\xi}), L_r)=g(\nabla_{L_r}(\xi\nabla\bar{\xi}), L_r)+g(\nabla_{L_r}(\bar{\xi}\nabla\xi), L_r)\\&=2\Re(\xi \Hessian_{\bar{\xi}}(L_r,L_r))+|L_r\xi|^2+|L_r\bar{\xi}|^2
\end{split}\]
which implies \[\Hessian_\psi(L,L)=\Hessian_\psi(L_r,L_r)\leq -C|L_r\Hessian_r(N_r, L_r)|^2\] because $\xi=0$ on $\Sigma$.

Next, we prove \[L_r\Hessian_r(N_r, L_r)=N_r\Hessian_r(L_r, L_r)\] on $\Sigma$. We have
\[\begin{split}
L_r\Hessian_r(N_r, L_r)=&L_rg(\nabla_{N_r}\nabla r, L_r)\\=&g(\nabla_{L_r}\nabla_{N_r}\nabla r, L_r)+g(\nabla_{N_r}\nabla r, \nabla_{\overline{L}_r} L_r)\\=&g(\nabla_{L_r}\nabla_{N_r}\nabla r, L_r)+\Hessian_r(N_r, \nabla_{\overline{L}_r} L_r).
\end{split}\]
Since \[\Hessian_r(L_r, N_r)=\Hessian_r(N_r, L_r)=0\] and \[0=\Hessian_r(L_r, L_r)=L_r(\overline{L}_rr)-(\nabla_{L_r}\overline{L}_r)r=-(\nabla_{L_r}\overline{L}_r)r,\] we have that $\nabla_{\overline{L}_r} L_r$ is proportional to $L_r$ and $\Hessian_r(N_r, \nabla_{\overline{L}_r} L_r)=0$. Hence
\[
L_r\Hessian_r(N_r, L_r)=g(\nabla_{L_r}\nabla_{N_r}\nabla r, L_r)=g(\nabla_{N_r}\nabla_{L_r}\nabla r, L_r)+g(\nabla_{[L_r,N_r]}\nabla r, L_r),
\] because of the vanishing of the sectional curvature of $\mathbb{C}^2$. Therefore
\[\begin{split}
L_r\Hessian_r(N_r, L)=&g(\nabla_{N_r}\nabla_{L_r}\nabla r, L_r)+g(\nabla_{[L_r,N_r]}\nabla r, L_r)\\=&g(\nabla_{N_r}\nabla_{L_r}\nabla r, L_r)+\overline{g(\nabla_{L_r}\nabla r, [L_r,N_r])}\\=&N_rg(\nabla_{L_r}\nabla r, L_r)-g(\nabla_{L_r}\nabla r, \nabla_{\overline{N}_r}L_r)+\overline{g(\nabla_{L_r}\nabla r, [L_r,N_r])},
\end{split}\]
where the second term vanishes because \[0=\Hessian_r(N_r, L_r)=N_r(\overline{L}_rr)-(\nabla_{N_r}\overline{L}_r)r=-(\nabla_{N_r}\overline{L}_r)r\] implies that $\nabla_{\overline{N}_r}L_r$ is proportional to $L_r$ and the third term vanishes because $[L_r, N_r]$ is linearly spanned by $L_r$ and $N_r$ and $g(\nabla_{L_r}\nabla r, L_r)=g(\nabla_{L_r}\nabla r, N_r)=0$. Thus \[L_r\Hessian_r(N_r, L_r)=N_rg(\nabla_{L_r}\nabla r, L_r)=N\Hessian_r(L_r,L_r),\] which completes the proof.

\bigskip
\bigskip

\noindent {\bf Acknowledgments}. We appreciate all kind help we received from Dr. Anne-Katrin Gallagher. The authors also thank to Dr. Bun Wong, Dr. Qi S. Zhang, Xinghong Pan, Dr. Lihan Wang for fruitful conversations. We thanks to Dr. Yuan Yuan for his interests and careful reading. We thank to Jihun Yum for pointing out a typo for us too.

\printbibliography

\nocite{Ko99}
\end{document}